\theoremstyle{plain} %text of this environment is typesetted in italics
\newtheorem{theorem}{\indent\bf Theorem}[section]
\newtheorem{conjecture}[theorem]{\indent\bf Conjecture}
\theoremstyle{definition} %text of this environment is typesetted in roman letters
\newtheorem{thm}{Theorem}[section]
\newtheorem{cor}[thm]{Corollary}
\newtheorem{lem}[thm]{Lemma}
\newtheorem{prop}[thm]{Proposition}
\theoremstyle{definition}
\newtheorem{defn}{Definition}[section]
\theoremstyle{remark}
\newcommand{\be}{\begin{equation}}
	\newcommand{\ee}{\end{equation}}
\newcommand{\bea}{\begin{eqnarray}}
	\newcommand{\eea}{\end{eqnarray}}
\newcommand{\ben}{\begin{eqnarray*}}
	\newcommand{\een}{\end{eqnarray*}}
\newcommand{\bt}{\begin{split}}
	\newcommand{\et}{\end{split}}
\newcommand{\bet}{\begin{equation}}
	\newcommand{\mc}{\mathbb{C}}
	\newcommand{\ra}{\rightarrow}
	\newcommand{\beq}{\begin{equation*}}
		\newcommand{\eeq}{\end{equation*}}
	\newcommand{\bi}{\begin{itemize}}
		\newcommand{\ei}{\end{itemize}}
\begin{document}
		
		\title[Linear isometric invariants of bounded domains]
		{Linear isometric invariants of bounded domains}
		
		\author[F. Deng]{Fusheng Deng}
		\address{Fusheng Deng: \ School of Mathematical Sciences, University of Chinese Academy of Sciences\\ Beijing 100049, P. R. China}
		\email{fshdeng@ucas.ac.cn}
		\author[J. Ning]{Jiafu Ning}
		\address{Jiafu Ning: \ Department of Mathematics, Central South University, Changsha, Hunan 410083, P. R. China.}
		\email{jfning@csu.edu.cn}
		\author[Z. Wang]{Zhiwei Wang}
		\address{ Zhiwei Wang: \ Laboratory of Mathematics and Complex Systems (Ministry of Education)\\ School of Mathematical Sciences\\ Beijing Normal University\\ Beijing 100875\\ P. R. China}
		\email{zhiwei@bnu.edu.cn}
		\author[X. Zhou]{Xiangyu Zhou}
		\address{Xiangyu Zhou: Institute of Mathematics\\Academy of Mathematics and Systems Sciences\\and Hua Loo-Keng Key
			Laboratory of Mathematics\\Chinese Academy of
			Sciences\\Beijing\\100190\\P. R. China}
		\address{School of
			Mathematical Sciences, University of Chinese Academy of Sciences,
			Beijing 100049, China}
		\email{xyzhou@math.ac.cn}
		
		\begin{abstract}
			We introduce two new conditions for bounded domains, namely  $A^p$-completeness  and boundary blow down type,
			and show that, for two bounded domains $D_1$ and $D_2$ that are $A^p$-complete and not of boundary blow down type, if
			there exists a linear isometry from $A^p(D_1)$ to $A^{p}(D_2)$ for some real number $p>0$ with $p\neq $ even integers,
			then $D_1$ and $D_2$ must be holomorphically equivalent, where for a domain $D$, $A^p(D)$ denotes the space of $L^p$ holomorphic functions on $D$.
			%We also proposed some related conjectures.
		\end{abstract}

		\maketitle
		
		\section{Introduction}\label{sec:intro}
		It is   known that two pseudoconvex domains (or even Stein manifolds) $ D_1$ and $ D_2$ are biholomorphic
		if and only if $\mathcal O( D_1)$ and $\mathcal O( D_2)$, the spaces of holomorphic functions,
		are isomorphic as $\mc$-algebras with unit.
		This implies that the holomorphic structure of a pseudoconvex domain
		is uniquely determined by the algebraic structure of the space of holomorphic functions on the domain.
		
		In the above result, the multiplicative  structure on the spaces of holomorphic functions plays an essential role.
		In the present work, we prove some results in a related but different direction.
		
		Let $ D$ be a domain in $\mathbb C^n$.
		Let $z=(z_1,\cdots,z_n)$ be the natural holomorphic coordinates of $\mathbb C^n$,
		and let $d\lambda_n:={(\frac{i}{2})^n}dz_1\wedge d\overline{z}_1\wedge\cdots\wedge dz_n\wedge d\overline{z}_n$
		be the canonical volume form on $ D$.
		For $p>0$, we denote by  $A^p( D)$  the space of all holomorphic functions $\phi$ on $ D$ with finite $L^p$-norm
		\begin{align*}
			\|\phi\|_p:=\left(\int_ D |\phi|^pd\lambda_n\right)^{1/p}.
		\end{align*}
		It is a standard fact that for $p\geq 1$,  $A^p( D)$ are
		separable Banach spaces, and for $0<p<1$, $A^p( D)$ are complete
		separable metric spaces with respect to the metric
		$$d(\varphi_1,\varphi_2):=\|\varphi_1-\varphi_2\|_p^p.$$
		
		The $p$-Bergman kernel is defined as follows:
		\begin{align*}
			B_{ D,p}(z):=\sup_{\phi\in A^p( D)}\frac{|\phi(z)|^2}{\|\phi\|^2_p}.
		\end{align*}
		When $p=2$, $B_{ D,p}$ is the ordinary Bergman kernel.
		By a standard argument of the  Montel theorem,
		one can prove that  $B_{ D,p}$ is a continuous plurisubharmonic function on $ D$.
		We say that $B_{ D,p}$ is exhaustive if for any real number $c$ the set $\{z\in D| B_{ D,p}(z)\leq c\}$ is compact.
		
		A bounded domain $ D$ is called hyperconvex if there is a
		plurisubharmonic function $\rho: D\ra [-\infty, 0)$ such that
		for any $c<0$ the set $\{z\in  D|\rho(z)\leq c\}$ is compact.

		In this paper, in stead of $\mathcal O(D)$, we will consider the space $A^p( D)$ (for some fixed $p>0$) as linear invariants of bounded domains.
		%We also assume throughout this paper that $p$ is not an even integer.
		The main purpose is to prove, under certain conditions, that two bounded domains $D_1$ and $D_2$ are holomorphically equivalent if there exists
		a linear isometry   $T:A^p( D_1)\ra A^p( D_2) $ between $A^p(D_1)$ and $A^p(D_2)$, i.e., $T$ is a linear isomorphism and $\|T(\phi)\|_p=\|\phi\|_p$ for all $\phi\in A^p( D_1)$.
		Note that $A^p( D)$ carries a norm structure (a geometric structure) but there is no natural multiplicative structure on it.
		In some sense, the norm structure on $A^p( D)$ plays similar role of the multiplicative structure on $\mathcal O(D)$ in our consideration.

		It is proved in \cite{DWZZ20} the following
		\begin{thm}\label{thm-intro: main theorem in previous paper}
			Let $ D_1\subset\mc^n$ and $ D_2\subset\mc^m$ be bounded domains.
			If $D_1$ and $D_2$ are hyperconvex, and
			suppose that there is a $p>0, p\neq 2,4,6 \cdots$ , such that
			\begin{itemize}
				\item[(1)] there is a  linear isometry $T:A^p( D_1)\ra A^p( D_2)$,  and
				\item[(2)] the $p$-Bergman kernels of  $ D_1$ and $ D_2$ are exhaustive,
			\end{itemize}
			then $m=n$ and there exists a unique biholomorphic map $F: D_1\ra  D_2$ such that
			\begin{align*}
				|T(\phi)\circ F||J_F|^{2/p}=|\phi|, \ \forall \phi\in A^p( D_1),
			\end{align*}
			where $J_F$ is the holomorphic Jacobian of $F$.
			If $n=1$, the assumption of hyperconvexity can be dropped.
		\end{thm}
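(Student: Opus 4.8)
The plan is to read this as a Banach--Stone type rigidity theorem and to recover a weighted composition operator from the isometry, exploiting the fact that $t\mapsto|t|^p$ is \emph{not} a polynomial in $(t,\bar t)$ precisely when $p$ is not an even integer.

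\emph{Step 1: moment identities.} Since $D_1$ is bounded, the constant $1$ and the coordinate functions $z_1,\dots,z_n$ all lie in $A^p(D_1)$. Writing $dV=d\lambda_n$ for the volume form and $h:=T1\in A^p(D_2)$, the isometry gives, for every $\psi\in A^p(D_1)$ and $\lambda\in\mathbb{C}$,
\[
\int_{D_1}|1+\lambda\psi|^p\,dV=\int_{D_2}|h+\lambda\,T\psi|^p\,dV .
\]
Expanding $|1+w|^p=(1+w)^{p/2}(1+\bar w)^{p/2}=\sum_{j,k}\binom{p/2}{j}\binom{p/2}{k}w^j\bar w^k$ and comparing coefficients of $\lambda^j\bar\lambda^k$, the hypothesis that $p$ is not an even integer forces every $\binom{p/2}{j}$ to be nonzero, so no coefficient is lost and one obtains $\int_{D_1}\psi^j\bar\psi^k\,dV=\int_{D_2}|h|^p g^j\bar g^k\,dV$ for all $j,k$, where $g:=T\psi/h$. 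The clean way to package this is as a measure-determinacy statement: the family $\{x\mapsto|1+\lambda x|^p\}$ separates finite measures when $p\notin 2\mathbb{Z}$, so the displayed identity says the pushforward measures $\psi_*(dV_{D_1})$ and $(T\psi/h)_*(|h|^p\,dV_{D_2})$ coincide. Running this with $1+\sum_i\lambda_i\psi_i$ and polarizing in the $\lambda_i$ upgrades it to
\[
\int_{D_1}Q(\psi_1,\dots;\bar\psi_1,\dots)\,dV=\int_{D_2}Q\!\left(\tfrac{T\psi_1}{h},\dots\right)|h|^p\,dV\qquad(\ast)
\]
for every polynomial $Q$. The first point to nail down is the interchange of series and integral, and the behaviour near the zeros of $h$; I expect to handle it by first restricting to bounded $\psi$ and small $|\lambda|$ and invoking dominated convergence, or by arguing directly with the measure-determinacy of $\{|1+\lambda x|^p\}$.

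\emph{Step 2: the map.} Put $G:=(Tz_1/h,\dots,Tz_n/h)$, defined $|h|^p\,dV$-a.e. on $D_2$. Taking $\psi_i=z_i$ in $(\ast)$ says exactly $G_*(|h|^p\,dV)=\mathbf 1_{D_1}\,dV$. Testing $(\ast)$ with $Q=|z|^{2N}$ and letting $N\to\infty$ bounds $|G|$ by the radius of a ball containing $D_1$, so $G$ takes values in $\overline{D_1}$ and the pushforward is compactly supported; its moments then determine it. Away from the zero set of $h$ the components of $G$ are holomorphic, and $(\ast)$ exhibits $G$ as a weighted measure isomorphism onto $D_1$.

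\emph{Step 3: inversion, regularity, and $m=n$.} Since $T^{-1}:A^p(D_2)\to A^p(D_1)$ is again an isometry, the same construction produces $\tilde h:=T^{-1}1$ and $\tilde G:D_1\to\overline{D_2}$. I would show $G$ and $\tilde G$ are mutually inverse almost everywhere; combined with holomorphy off the zero sets, this forces those sets to be negligible and $G$ to be, off a null set, a biholomorphism $D_2\to D_1$, whence $m=n$. Changing variables in $(\ast)$ identifies the weight, $|h|^p=|J_G|^2$; setting $F:=G^{-1}:D_1\to D_2$ and retracing $(\ast)$ for a general $\phi$ gives $T\phi=h\cdot(\phi\circ G)$, hence $|T\phi\circ F|\,|J_F|^{2/p}=|h\circ F|\,|J_F|^{2/p}\,|\phi|=|\phi|$, the asserted formula, with uniqueness of $F$ immediate.

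\emph{Step 4: globalization, and the main obstacle.} The delicate part, and where the hypotheses genuinely enter, is upgrading this almost-everywhere biholomorphism to a genuine biholomorphism of $D_1$ \emph{onto} $D_2$: I must exclude that $F$ lands in a proper subdomain and must control the zeros of $h$. Here I would feed the transformation law for $B_{D,p}$ coming from the formula above into the exhaustiveness hypothesis to prove that $F$ is proper, and use hyperconvexity (through a plurisubharmonic exhaustion) to extend the construction across the would-be singular set; properness of an injective holomorphic map then yields the biholomorphism. I expect this passage from measure-theoretic/meromorphic data to a global biholomorphism, rather than the formal moment computation of Step~1, to be the principal difficulty.
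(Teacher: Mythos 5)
Your architecture is essentially the one behind the paper's treatment of this statement (the paper itself does not prove it: it quotes it from \cite{DWZZ20}, and recalls the machinery in its Sections 2--4), namely: an equimeasurability statement for the family $|1+\sum_i\alpha_i f_i|^p$, a map built from $T$ (or $T^{-1}$) applied to coordinate functions divided by $T1$ (or $T^{-1}1$), the Jacobian identity, and a final globalization step where the two hypotheses enter. But two of your four steps are not proofs, and they are exactly the two hard ones. In Step 1 the binomial-expansion derivation fails as stated: the series for $|1+w|^p$ converges only for $|w|<1$ while $\psi$ and $T\psi/h$ are in general unbounded; the moments $\int_{D_1}\psi^j\bar\psi^k\,dV$ need not even be finite for $\psi\in A^p(D_1)$ (an $L^p$ holomorphic function need not lie in any higher $L^q$); and even when all moments exist, a measure on $\mathbb{C}^N$ is not determined by its moments without compact support or Carleman-type growth conditions. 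The rigorous substitute is Rudin's equimeasurability theorem (\cite{Rud76}, Lemma 2.1 of the paper): equality of $\int|1+\sum_i\alpha_if_i|^p$ for all $\alpha\in\mathbb{C}^N$, with $p$ not an even integer, yields $\int u(f_1,\dots,f_N)\,d\mu=\int u(g_1,\dots,g_N)\,d\nu$ for \emph{every} nonnegative Borel $u$, both sides possibly infinite. Your fallback phrase ``measure-determinacy of $\{|1+\lambda x|^p\}$'' is precisely this theorem; it has a genuinely nontrivial proof and cannot be obtained by formal coefficient comparison, and its formulation for nonnegative Borel $u$ is also what dissolves the convergence issues you flag.

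The second gap is Step 4, which you correctly identify as the crux but leave as a plan, and the mechanisms you propose are not the ones that work. Properness of $F$ does not follow from the transformation law $B_{D_1,p}(z)=B_{D_2,p}(F(z))\,|J_F(z)|^{4/p}$: if $z_j\to\partial D_1$ while $F(z_j)$ stays in a compact subset of $D_2$, exhaustiveness of $B_{D_1,p}$ only forces $|J_F(z_j)|=|T^{-1}(1)(z_j)|^{p/2}\to\infty$, which is no contradiction since $T^{-1}(1)\in A^p(D_1)$ may be unbounded near $\partial D_1$. In the actual argument the two hypotheses enter separately and differently. The kernel-exhaustiveness hypothesis (weakened in this paper to $A^p$-completeness, Proposition 4.2) is used to show that the partially defined map sends all of $D_1\setminus A_1$ into $D_2$, where $A_1=(T^{-1}1)^{-1}(0)$: if some point of $D_1\setminus A_1$ mapped outside $D_2$, then, using that the map is an unramified local biholomorphism there, every $\psi\in A^p(D_2)$ would extend holomorphically and isometrically to a strictly larger domain $\tilde D\supsetneqq D_2$ with $\tilde D\setminus D_2$ of measure zero --- absurd. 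This yields a holomorphic $F_1:D_1\to\overline{D_2}$ restricting to a biholomorphism $D_1\setminus A_1\to D_2\setminus A_2$ with $F_1(A_1)\subset\partial D_2$. Hyperconvexity is then used, not to ``extend the construction across the would-be singular set,'' but to prove $A_1=A_2=\emptyset$: a nonempty exceptional hypersurface exhibits the other domain as being of boundary blow down type, hence (by a disc meeting the hypersurface transversally at one point) produces a proper holomorphic map $\mathbb{D}^*\to D_j$; pulling back the negative plurisubharmonic exhaustion, extending it subharmonically across the puncture, and applying the maximum principle gives a contradiction (Lemma 5.3 and Theorem 5.4 of the paper). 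Without these two arguments --- Rudin's theorem and the extension/punctured-disc mechanism --- the proposal is a correct outline of the known proof but not a proof.
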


		For the case that $n=m=1$ and $p=1$,
		Theorem \ref{thm-intro: main theorem in previous paper} was proved by Lakic \cite{Lakic97} and Markovic in \cite{Mark03}.
		Application of spaces of pluricanonical forms with pseudonorms to birational geometry of projective algebraic
		manifolds  was proposed by Chi and Yau in \cite{Chi-Yau2008} and was studied in \cite{An, Chi2016, Yau15}.
		%These works form the motivation to the works in \cite{DWZZ20}.

		A relative version of Theorem \ref{thm-intro: main theorem in previous paper} is established by Inayama in \cite{Ina19}.

		The purpose of the present paper is to generalize Theorem \ref{thm-intro: main theorem in previous paper} to more general domains.

		%We will focus on the case that $p$ is of the form $p=2/m$, since in this case $A^p(D)$ is an intrinsically defined
		%and such invariant can be defined for general complex manifolds (see e.g. \cite{DWZZ20} for more details).
		
		The first observation is to replace the $p$-Bergman kernel exhaustion condition in Theorem \ref{thm-intro: main theorem in previous paper} by
		a  new condition called $A^p$-completeness.
		
		\begin{defn}\label{def-intro:A^p complete}
			A bounded domain $D\subset\mc^n$ is \emph{$A^p$-complete} if there does not exist a domain $\tilde D$
			with $D\subsetneqq \tilde D$ such that the restriction map
			$i:A^p(\tilde{D})\rightarrow A^p(D)$ is a linear isometry.
		\end{defn}
		
		It is obvious that, if the $p$-Bergman kernel of $D$ is exhaustive, then $D$ is $A^p$-complete. But the converse is not true in general. In fact, $D$ is  $A^p$-complete for any $p>0$ if $\mathring{\overline{D}}= D$, namely, the interior of the closure of $D$ is $D$ itself. Note that $D$ must be pseudoconvex if the $p$-Bergman kernel on $D$ is exhaustive, so   the condition of  $A^p$-completeness is much weaker than the $p$-Bergman exhaustion condition.

		%The following two basic facts are obvious:
		%\bi
		%\item[(1)] $D$ must be $A^p$complete for any $p>0$ if $\mathring{\overline{D}}\backslash D$ has null measure, namely, the interior of the closure of $D$ is $D$ itself;
		%\item[(2)] if the $p$ Bergman kernel of $D$ is exhaustive, then $D$ is $A^p$ complete.
		%\ei
		
		It is proved in \cite{NZZ16} that for $0<p<2$ the $p$-Bergman kernel on any bounded pseudoconvex domain is exhaustive,
		so such domains are $A^p$-complete for all $p\in (0,2)$.
		
		In our discussion, the importance of the property of $A^p$-completeness is encoded in the following result.
		For $A^p$-complete domains $D_1$ and $D_2$,  if there is a linear isometry from $A^p(D_1)$ to $A^p(D_2)$,
		then $D_1$ and $D_2$ are almost holomorphically equivalent, in the sense that there exist hypersurfaces $A_1$ and $A_2$ in $D_1$ and $D_2$ respectively, such that $D_1\backslash A_1$ and $D_2\backslash A_2$ are holomorphically equivalent.
		(see Theorem \ref{thm:biholo surface comp.} for details).
		
		Our  second observation is to replace  the hyperconvexity condition in Theorem \ref{thm-intro: main theorem in previous paper} by
		a weaker new condition called being not of boundary blow down type (BBDT for short, see \S \ref{sec:bbdt} for definition).
		
		The main result of the present paper  is
		
		\begin{thm}\label{thm-intro:not bbdt implies biholomorphic}
			Assume that $D_1$ and $D_2$ are $A^p$-complete domains which are not of boundary blow down type, and $T:A^p(D_1)\ra A^p(D_2)$ is a linear isometry,
			for some $p>0$ and $p$ is not an even integer.
			Then there exists a unique biholomorphic map $F:D_1\ra D_2$ such that $J_F(z)^{2/p}$ has a single-valued branch on $D_1$,
			and
			$$T\phi(F(z))J_F(z)^{2/p}=\lambda\phi(z),\ \forall \phi\in A^p(D_1), z\in D_1,$$
			where  $\lambda$ is a constant with  $|\lambda|=1$.
		\end{thm}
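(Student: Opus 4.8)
The plan is to combine the ``almost equivalence'' supplied by Theorem~\ref{thm:biholo surface comp.} with a removable-singularity argument that upgrades it to a genuine biholomorphism, the upgrade being exactly where the hypothesis of not being of boundary blow down type enters. Since $D_1$ and $D_2$ are $A^p$-complete and $T$ is a linear isometry, Theorem~\ref{thm:biholo surface comp.} furnishes analytic hypersurfaces $A_1\subset D_1$ and $A_2\subset D_2$, a biholomorphism $F_0:D_1\setminus A_1\to D_2\setminus A_2$, and a constant $\lambda$ with $|\lambda|=1$ such that a single-valued branch of $J_{F_0}^{2/p}$ exists on the (connected) set $D_1\setminus A_1$ and
\begin{equation*}
T\phi(F_0(z))\,J_{F_0}(z)^{2/p}=\lambda\,\phi(z),\qquad \forall\,\phi\in A^p(D_1),\ z\in D_1\setminus A_1.
\end{equation*}
Thus the entire difficulty is to show that $F_0$ extends holomorphically across $A_1$ to a biholomorphism $F:D_1\to D_2$.

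First I would extend the map itself. Because $D_2$ is bounded, each component of $F_0$ is a bounded holomorphic function on $D_1\setminus A_1$, so Riemann's second removable singularity theorem lets $F_0$ extend to a holomorphic map $\hat F:D_1\to\overline{D_2}$; symmetrically $F_0^{-1}$ extends to $\hat G:D_2\to\overline{D_1}$. The crucial point is that $\hat F(D_1)\subseteq D_2$. Since $F_0$ already maps $D_1\setminus A_1$ into $D_2$, the ``bad set'' $E:=\hat F^{-1}(\partial D_2)$ is contained in the hypersurface $A_1$; if $E$ were nonempty, then $\hat F$ would collapse a piece of $A_1$ onto $\partial D_2$, exhibiting $D_2$ as a boundary blow down, contrary to hypothesis. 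Hence $E=\emptyset$ and $\hat F(D_1)\subseteq D_2$; the symmetric argument, using that $D_1$ is not of boundary blow down type, gives $\hat G(D_2)\subseteq D_1$. Then $\hat G\circ\hat F$ is holomorphic on $D_1$ and equals the identity on the dense set $D_1\setminus A_1$, so it is the identity; the same holds for $\hat F\circ\hat G$, and $F:=\hat F$ is a biholomorphism $D_1\to D_2$ (in particular $n=m$).

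It then remains to propagate the transformation formula and the properties of the Jacobian. Since $F$ is biholomorphic, $J_F$ is zero-free on $D_1$, so $\exp(\tfrac{2}{p}\log J_F)$ is locally defined everywhere; the single-valued branch already fixed on the dense connected set $D_1\setminus A_1$ then has trivial monodromy around every loop of $D_1$ (loops contractible in $D_1$ acquire no monodromy because $J_F$ never vanishes, and $\pi_1(D_1\setminus A_1)\to\pi_1(D_1)$ is surjective), so $J_F^{2/p}$ extends to a single-valued holomorphic function on $D_1$. Both sides of the displayed identity are now holomorphic on all of $D_1$ and agree on $D_1\setminus A_1$, hence agree on $D_1$, yielding the asserted formula. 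Finally, uniqueness of $F$ follows from rigidity of this formula: if $F$ and $F'$ both satisfied it, then $\psi(F(z))/\psi(F'(z))$ would be independent of $\psi=T\phi$; but $T$ is onto $A^p(D_2)$, which contains all polynomials (as $D_2$ is bounded) and hence separates points, so $F(z)=F'(z)$ throughout.

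The main obstacle is the middle step: controlling $\hat F$ on $A_1$, that is, ruling out that the extended map sends a piece of the exceptional hypersurface to $\partial D_2$. This is the sole reason the not-boundary-blow-down-type hypothesis is imposed, and making the reduction ``$E\subseteq A_1$ with $E\neq\emptyset$ forces a boundary blow down'' precise against the definition in \S\ref{sec:bbdt} is the heart of the argument.
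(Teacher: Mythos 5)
Your overall skeleton agrees with the paper's: invoke Theorem \ref{thm:biholo surface comp.}, then use the not-BBDT hypothesis to force $A_1=A_2=\emptyset$, and finally propagate the branch and the transformation formula (these last steps, and your uniqueness argument, are essentially fine, though note that Theorem \ref{thm:biholo surface comp.} only gives the modulus identities, not the exact formula with a branch, and that once $A_1=\emptyset$ your monodromy discussion is vacuous). But the step you yourself call the heart of the argument has a genuine gap, and as sketched it does not work. To exhibit $D_2$ as being of boundary blow down type via Definition \ref{def:BBDP} with the data $M=D_1$, $A=A_1$ (note $E=A_1$ exactly, by part (4) of Theorem \ref{thm:biholo surface comp.}), $\sigma=\hat F$, you must verify condition (i): $\sigma(M\setminus A)=D_2$. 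But $\hat F(D_1\setminus A_1)=D_2\setminus A_2$, and in fact the image of $\hat F$ never meets $A_2$ at all (points of $D_1\setminus A_1$ land in $D_2\setminus A_2$, points of $A_1$ land in $\partial D_2$). So your witness satisfies the definition only if $A_2=\emptyset$; symmetrically, exhibiting $D_1$ as BBDT from $A_2\neq\emptyset$ via $M=D_2$, $\sigma=\hat G$ requires $A_1=\emptyset$. In the hard case where both $A_1$ and $A_2$ are nonempty, each reduction presupposes the conclusion of the other, and the argument is circular: what your data actually shows is that $D_2\setminus A_2$ (not $D_2$) is of BBDT, which contradicts nothing in the hypotheses. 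You also never produce the meromorphic function $h$ with $h^{-1}(\infty)=A$ demanded by condition (iii) of Definition \ref{def:BBDP}; that part is easily repaired ($h=1/J_{\hat F}$ works, since $|J_{\hat F}|^{2/p}=|\phi_0|$ shows $J_{\hat F}$ vanishes exactly on $A_1$), but it must be said.

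The paper closes exactly the gap above by a construction your proposal is missing: it glues $D_1$ and $D_2$ along $F$ into a complex manifold $D=D_1\sqcup_F D_2$ (Hausdorffness is checked using $F_1(A_1)\subset\partial D_2$ and $F_2(A_2)\subset\partial D_1$). As a set, $D=j_1(D_1)\sqcup j_2(A_2)$, so the map $\sigma$ defined as the identity on $j_1(D_1)$ and as $F_2$ on $j_2(D_2)$ satisfies $\sigma(D\setminus j_2(A_2))=D_1$ exactly: surjectivity is now automatic because $M$ contains a full copy of $D_1$, which is precisely what taking $M=D_1$ or $M=D_2$ alone can never provide. Together with the meromorphic function $J$ equal to $J_{F_1}$ on $j_1(D_1)$ and to $1/J_{F_2}$ on $j_2(D_2\setminus A_2)$, whose polar set is exactly $j_2(A_2)$, this yields the clean implication: $A_2\neq\emptyset$ forces $D_1$ to be of BBDT, with no assumption on $A_1$; its mirror image forces $A_1=\emptyset$ as well. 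Your version only rules out the hypersurfaces one at a time, conditionally on the other already being empty, so the proof as proposed is incomplete.
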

		
		%The methods to Theorem \ref{thm-intro: main theorem in previous paper} in \cite{DWZZ20}
		%and in the present paper are partially inspired by the argument in \cite{Mark03}.
		
		For $p\in (0,2)$, we believe that the condition of not being BBDT in the above theorem  is not necessary,  namely, we have the following
		
		\begin{conjecture}\label{conj:A^p complete implies iso}
			Let $D_1$ and $D_2$ be $A^p$-complete bounded domains for some $p\in (0,2)$.
			If there is a linear isometry between $A^p(D_1)$ and $A^p(D_2)$, then $D_1$ and $D_2$ are holomorphically equivalent.
		\end{conjecture}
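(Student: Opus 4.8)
The plan is to deduce the conjecture from Theorem~\ref{thm-intro:not bbdt implies biholomorphic} by showing that its extra hypothesis is automatic in this range of $p$. Since the interval $(0,2)$ contains no even integer, Theorem~\ref{thm-intro:not bbdt implies biholomorphic} applies verbatim to $D_1,D_2$ and the isometry $T$ the moment we know that $D_1$ and $D_2$ are \emph{not} of boundary blow down type. Thus the whole content of the conjecture is the implication: if $D$ is an $A^p$-complete bounded domain with $p\in(0,2)$, then $D$ is not of boundary blow down type. I would isolate and prove this as a lemma; everything else is then a citation of the main theorem. Note that this is the opposite logical posture from the weaker theorem, where ``not BBDT'' is assumed; here it must be \emph{derived} from $A^p$-completeness and $p<2$.

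To analyze the blow-down phenomenon concretely, I first apply Theorem~\ref{thm:biholo surface comp.} to $T$: there are hypersurfaces $A_1\subset D_1$, $A_2\subset D_2$ and a biholomorphism $F:D_1\setminus A_1\ra D_2\setminus A_2$ with $|T\phi\circ F|\,|J_F|^{2/p}=|\phi|$ on $D_1\setminus A_1$; in particular $\dim D_1=\dim D_2=:n$. Because $D_2$ is bounded, the component functions of $F$ are bounded near the interior hypersurface $A_1$, and since $\int_{D_1\setminus A_1}|J_F|^2\,d\lambda_n=\mathrm{Vol}(D_2)<\infty$, the Jacobian $J_F$ is locally $L^2$ there. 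By the removable-singularity theorem across an analytic hypersurface (an $L^2$, hence pole-free, holomorphic function extends), $F$ extends to a holomorphic map $\tilde F:D_1\ra\overline{D_2}$ and $J_F$ extends to $J_{\tilde F}\in\mathcal O(D_1)$; running the same argument for $F^{-1}$ yields $\tilde G:D_2\ra\overline{D_1}$, with $\tilde G\circ\tilde F=\mathrm{id}$ and $\tilde F\circ\tilde G=\mathrm{id}$ wherever both sides take interior values. Consequently the conjecture reduces to excluding boundary blow-down, i.e. to proving $\tilde F(A_1)\cap\partial D_2=\emptyset$ and, symmetrically, $\tilde G(A_2)\cap\partial D_1=\emptyset$; granting this, $\tilde F:D_1\ra D_2$ and $\tilde G:D_2\ra D_1$ are mutually inverse biholomorphisms.

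To exclude blow-down I would feed in the special feature of $p\in(0,2)$. First I would promote $A^p$-completeness to exhaustiveness of the $p$-Bergman kernel: by \cite{NZZ16} the kernel $B_{D,p}$ of any bounded pseudoconvex domain is exhaustive for $p<2$, and I would argue that an $A^p$-complete domain in this range coincides with the pseudoconvex model to which this applies (passing to the $A^p$-envelope and using completeness to see that no proper enlargement occurs). With $B_{D_2,p}$ exhaustive I may use the transformation law
\[
B_{D_1,p}(z)=B_{D_2,p}(\tilde F(z))\,|J_{\tilde F}(z)|^{4/p},
\]
valid on $D_1\setminus A_1$ and extended by continuity. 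If some $a\in A_1$ had $b:=\tilde F(a)\in\partial D_2$, then necessarily $J_{\tilde F}(a)=0$ — otherwise $\tilde F$ would be a local biholomorphism carrying a full neighbourhood of $a$ onto a full neighbourhood of $b$, forcing points outside $\overline{D_2}$ into the image. Exhaustiveness then gives $B_{D_2,p}(\tilde F(z))\ra\infty$ as $z\ra a$, while $B_{D_1,p}$ stays bounded near the interior point $a$; the displayed identity pins the rate, showing $B_{D_2,p}(w)\sim B_{D_1,p}(a)\,|J_{\tilde F}|^{-4/p}$ blows up at $b$ at a polynomial rate dictated by the vanishing order of $J_{\tilde F}$ along the blown-down locus.

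The hard part — and the reason the statement is only conjectured — is to turn the last relation into a contradiction. At the level of $L^p$ norms the blow-down is self-consistent: the blow-up of a pulled-back function at $b\in\partial D_2$ is exactly balanced by the vanishing of $J_{\tilde F}$, so $T$ maps $A^p(D_1)$ isometrically onto $A^p(D_2)$ with no visible defect, and the isometry relation alone yields nothing. A successful argument must therefore extract genuinely quantitative rigidity from $p<2$: one needs a sharp \emph{upper} bound on the boundary growth of $B_{D_2,p}$ (a strengthening of exhaustiveness special to the range $p<2$) that is incompatible with the polynomial blow-up $|J_{\tilde F}|^{-4/p}$ forced by an interior-vanishing Jacobian together with the finiteness of $B_{D_1,p}(a)$. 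Equivalently, and perhaps more tractably, one would use such a growth bound to construct a proper domain $\tilde D_1\supsetneq D_1$ along the blow-down locus for which the restriction $A^p(\tilde D_1)\ra A^p(D_1)$ is an isometry, directly contradicting the $A^p$-completeness of $D_1$. Establishing this sharp boundary estimate for the $p$-Bergman kernel when $p<2$ is the key obstacle, and it is exactly what separates the conjecture from Theorem~\ref{thm-intro:not bbdt implies biholomorphic}.
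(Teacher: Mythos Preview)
The statement you are attempting to prove is labeled \emph{Conjecture} in the paper, and the paper gives no proof of it. It is explicitly left open; the only thing the paper establishes about it is the negative result in the final section showing the conclusion can fail when $p>2$. So there is no ``paper's own proof'' to compare your proposal against.

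Your outline correctly identifies that the entire content of the conjecture, modulo Theorem~\ref{thm-intro:not bbdt implies biholomorphic}, is the implication ``$A^p$-complete with $p\in(0,2)$ $\Rightarrow$ not BBDT,'' and you are candid that the final step is missing. Two remarks on the specific route you sketch. First, your plan to ``promote $A^p$-completeness to exhaustiveness of the $p$-Bergman kernel'' by arguing that an $A^p$-complete domain ``coincides with the pseudoconvex model'' is not justified and is almost certainly false as stated: the paper stresses that $A^p$-completeness is strictly weaker than $p$-Bergman exhaustion and does \emph{not} force pseudoconvexity (any domain with $\mathring{\overline{D}}=D$ is $A^p$-complete, including non-pseudoconvex ones such as $\tilde D\setminus K$ for hyperconvex $\tilde D$ and compact $K$). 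So the appeal to \cite{NZZ16} cannot be made directly, and this step is a genuine gap rather than a routine reduction. Second, even granting exhaustiveness of $B_{D_2,p}$, the contradiction you seek from the transformation law would require a \emph{quantitative upper bound} on boundary growth of $B_{D_2,p}$ that rules out polynomial blow-up of the specific order $|J_{\tilde F}|^{-4/p}$; no such bound is available in the paper or in the cited literature, which is precisely why the authors pose this as a conjecture rather than a theorem.
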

		
		In the last section, we will construct an example to show that this conjecture is not true for $p>2$.

		Here we  point out that a domain $D$ is not of BBDT if $D$ can be represented as
		$\tilde D\backslash K$ for some hyperconvex domain $\tilde D$ and compact subset $K$ (may be empty) of $\tilde D$.
		Note that $D$ can not be pseudoconvex if $K\neq \emptyset$.
		So one of the novelties of Theorem \ref{thm-intro:not bbdt implies biholomorphic} is that we can handle some domains
		that are not pseudoconvex, which is a point that beyond the scope in \cite{DWZZ20}.
		
		It is known that bounded pseudoconvex domains with H\"older boundary are hyperconvex \cite{Che21}.
		Of course such a domain $D$ must satisfy $\mathring{\overline{D}}=D$ and hence is $A^p$-complete.
		So we get a corollary of Theorem \ref{thm-intro:not bbdt implies biholomorphic} as follows,
		which can not be deduced from Theorem \ref{thm-intro: main theorem in previous paper} if $p>2$, since we do not know if the $p$-Bergman kernels of $D_1$ and  $D_2$ are exhaustive.
		
		\begin{cor}\label{cor:Libschitz boundary case}
			Assume that $D_1$ and $D_2$ are bounded pseudoconvex domains in $\mc^n$ with H\"older boundary.
			If there exists a linear isometry between $A^p(D_1)$ and $A^p(D_2)$
			for some $p>0$ and $p$ is not an even integer,
			then $D_1$ and $D_2$ are holomorphically equivalent.
		\end{cor}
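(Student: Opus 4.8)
The plan is to obtain Corollary \ref{cor:Libschitz boundary case} as an immediate consequence of Theorem \ref{thm-intro:not bbdt implies biholomorphic}: since that theorem already contains all of the analytic content, the task reduces to verifying that a bounded pseudoconvex domain with H\"older boundary satisfies the two structural hypotheses of the theorem, namely that it is $A^p$-complete and that it is not of boundary blow down type. I would carry this out for each of $D_1$ and $D_2$ separately and then simply quote the main theorem.

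For the geometric verification I would proceed in three short steps. First, by the result of \cite{Che21} recalled in the introduction, a bounded pseudoconvex domain with H\"older boundary is hyperconvex, so each $D_j$ admits a negative plurisubharmonic exhaustion. Second, I would observe that a H\"older boundary is locally the graph of a H\"older-continuous function, so that no point of $\overline{D_j}\setminus D_j$ can lie in the interior of $\overline{D_j}$; equivalently $\mathring{\overline{D_j}}=D_j$. By the remark following Definition \ref{def-intro:A^p complete}, this identity alone forces $D_j$ to be $A^p$-complete for every $p>0$, in particular for the given $p$. Third, since $D_j$ is hyperconvex, it admits the trivial representation $D_j=\tilde D\setminus K$ with $\tilde D=D_j$ hyperconvex and $K=\emptyset$, so by the criterion recorded immediately before the corollary it is not of BBDT.

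With both $D_1$ and $D_2$ shown to be $A^p$-complete and not of boundary blow down type, and with the hypothesized linear isometry $T:A^p(D_1)\to A^p(D_2)$ for a $p$ that is not an even integer, all hypotheses of Theorem \ref{thm-intro:not bbdt implies biholomorphic} are met, and that theorem produces a biholomorphic map $F:D_1\to D_2$; hence $D_1$ and $D_2$ are holomorphically equivalent. Because the corollary is essentially a packaging of facts already assembled in the introduction, there is no serious obstacle; the only place where genuine input is used is the pair of geometric inputs in the second step, namely Cheng's hyperconvexity theorem and the topological identity $\mathring{\overline{D_j}}=D_j$, which is precisely where the H\"older regularity of the boundary is needed. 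Everything else is bookkeeping of previously established results.
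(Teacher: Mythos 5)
Your proposal is correct and follows essentially the same route as the paper: H\"older boundary gives hyperconvexity via \cite{Che21} (hence not of BBDT, by Lemma \ref{lem:domain have PDP} and Theorem \ref{thm:pdp implies not bbdt} with $K=\emptyset$), and gives $\mathring{\overline{D_j}}=D_j$ (hence $A^p$-completeness), after which Theorem \ref{thm-intro:not bbdt implies biholomorphic} applies directly. This is precisely the chain of remarks the paper assembles in the introduction to justify the corollary.
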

		
		%Finally we mention another conjecture which is also tightly related to the main topic in our discussion.% (see \S \ref{sec:A^p complete} for details).
		%
		%\begin{conjecture}\label{con-intro:complementart A^p comp}
		%If $D$ be a bounded $A^p$-complete domain and $A$ is a hypersurface in $D$, then $D\backslash A$ is also $A^p$-complete.
		%\end{conjecture}
		
		For further study, it is possible to generalize the methods and results in the present paper to
		complex manifolds equipped with Hermitian holomorphic vector bundles and develop some relative version of them.

		\subsection*{Acknowledgements}
		The authors thank Zhenqian Li for helpful discussions on related topics.
		This research is supported by National Key R\&D Program of China (No. 2021YFA1002600). The authors are partially supported respectively by NSFC grants (11871451,  11801572, 12071035, 11688101).
		The first author is  partially supported by the Fundamental Research Funds for the Central Universities.
		The third author is partially supported by Beijing Natural Science Foundation (1202012, Z190003).

		\section{A measure theoretic preparation}\label{sec:Rudin and measure theory}
		As in the works  in \cite{Mark03,DWZZ20}, one of the key tool for our discussion in the present paper is the following result of Rudin.
		
		\begin{lem}[\cite{Rud76}]\label{lem:equ lp}
			Let $\mu$ and $\nu$ be finite positive measures on two sets $M$ and $N$ respectively.
			Assume $0<p<\infty$ and $p$ is not even.
			Let $n$ be a positive integer.
			If $f_i\in  L^p(M,\mu)$, $g_i\in L^p(N,\nu)$ for $1\leq i\leq n$ satisfy
			\begin{align*}
				\int_M|1+\alpha_1f_1+\cdots +\alpha_nf_n|^pd\mu=\int_N|1+\alpha_1g_1+\cdots +\alpha_ng_n|^pd\nu
			\end{align*}
			for all $(\alpha_1,\cdots,\alpha_n)\in \mathbb{C}^n$,
			then $(f_1,\cdots, f_n)$ and $(g_1,\cdots, g_n)$ are equimeasurable,
			i.e. for every bounded Borel measurable function (and for every real-valued nonnegative Borel function) $u:\mathbb C^n\ra \mathbb C$, we have
			\begin{align*}
				\int u(f_1,\cdots, f_n)d\mu=\int u(g_1,\cdots, g_n)d\nu.
			\end{align*}
			Furthermore, let $I:M\ra \mathbb C^n$ and $J: N\ra \mathbb C^n$ be the maps $I=(f_1,\cdots, f_n)$ and $J=(g_1,\cdots, g_n)$, respectively. Then we have
			\begin{align*}
				\mu(I^{-1}(E))=\nu(J^{-1}(E))
			\end{align*}
			for every Borel set $E$ in $\mathbb C^n$.
		\end{lem}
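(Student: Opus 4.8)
The plan is to recast the identity as a statement about two measures on $\mathbb{C}^n$, cut the dimension down to one by projection, and then settle a one–variable uniqueness problem whose solvability is governed by the non-integer nature of $p$. First I would push the given measures forward under $I$ and $J$: set $P:=I_*\mu$ and $Q:=J_*\nu$, the finite positive Borel measures on $\mathbb{C}^n$ with $P(E)=\mu(I^{-1}(E))$ and $Q(E)=\nu(J^{-1}(E))$. By the change–of–variables formula, both asserted conclusions are equivalent to the single statement $P=Q$: the last display is literally $P=Q$, and the equimeasurability integrals follow from it by approximating a bounded Borel $u$ by simple functions. Moreover $\int_{\mathbb{C}^n}|w|^p\,dP<\infty$ (and the same for $Q$), and the hypothesis becomes
\begin{align*}
\int_{\mathbb{C}^n}|1+\langle\alpha,w\rangle|^p\,dP(w)=\int_{\mathbb{C}^n}|1+\langle\alpha,w\rangle|^p\,dQ(w),\qquad\forall\,\alpha\in\mathbb{C}^n,
\end{align*}
where $\langle\alpha,w\rangle=\alpha_1w_1+\cdots+\alpha_nw_n$. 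Thus the whole lemma reduces to proving $P=Q$.

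Next I would reduce to one complex variable. For fixed $\beta\in\mathbb{C}^n$ let $P_\beta,Q_\beta$ be the laws of $\zeta=\langle\beta,w\rangle\in\mathbb{C}$ under $P,Q$; taking $\alpha=t\beta$ above gives $\int_{\mathbb{C}}|1+t\zeta|^p\,dP_\beta(\zeta)=\int_{\mathbb{C}}|1+t\zeta|^p\,dQ_\beta(\zeta)$ for all $t\in\mathbb{C}$. Since $\{\mathrm{Re}\langle\beta,\cdot\rangle:\beta\in\mathbb{C}^n\}$ exhausts all real linear functionals on $\mathbb{C}^n\cong\mathbb{R}^{2n}$, the Cram\'er--Wold device shows that $P=Q$ follows once $P_\beta=Q_\beta$ for every $\beta$. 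So it suffices to prove: if $\sigma$ is a finite signed measure on $\mathbb{C}$ with $\int|\zeta|^p\,d|\sigma|<\infty$ and $\int_{\mathbb{C}}|1+t\zeta|^p\,d\sigma(\zeta)=0$ for all $t\in\mathbb{C}$, then $\sigma=0$.

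Here the arithmetic of $p$ enters. Writing $|1+z|^p=(1+z)^{p/2}(1+\bar z)^{p/2}=\sum_{j,k\ge0}\binom{p/2}{j}\binom{p/2}{k}z^j\bar z^k$ for $|z|<1$, every coefficient is nonzero precisely because $p/2$ is not a nonnegative integer, i.e. because $p$ is not an even integer; for even $p$ the series terminates and the conclusion genuinely fails. I would exploit this globally rather than through a (divergent) moment expansion: averaging the vanishing identity against $e^{-im\arg t}$ over the circle $|t|=r$ collapses it, for each $m\in\mathbb{Z}$, to
\begin{align*}
\int_0^\infty \Psi_{p,m}(r\rho)\,d\tau_m(\rho)=0,\qquad \forall\,r>0,
\end{align*}
where $\tau_m$ is the pushforward of $e^{im\arg\zeta}\,d\sigma(\zeta)$ under $\zeta\mapsto|\zeta|=\rho$ and $\Psi_{p,m}(x)=\frac{1}{2\pi}\int_0^{2\pi}|1+xe^{i\phi}|^pe^{-im\phi}\,d\phi$ is an explicit radial kernel behaving like $x^{|m|}$ near $0$ and like $x^{p-|m|}$ near $\infty$. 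Taking the Mellin transform in $r$ factorises this as $\widetilde{\Psi_{p,m}}(s)\cdot\widehat{\tau_m}(s)=0$ on a common vertical strip; since the nonvanishing of the expansion coefficients forces $\widetilde{\Psi_{p,m}}(s)\neq0$ there, we get $\widehat{\tau_m}\equiv0$, hence $\tau_m=0$ for every $m$, and therefore $\sigma=0$.

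The main obstacle is exactly this one–variable uniqueness, and its difficulty is caused by the weak integrability: since $w$ is only assumed $L^p$, one cannot differentiate the identity at $\alpha=0$ past order $\sim p$ nor expand the integrand term by term, so no naive moment-matching can determine the measure. The substance of the argument is thus twofold: (i) justifying the phase averaging and the passage to the kernels $\Psi_{p,m}$ using that $|1+t\zeta|^p$ regrows only like $|t\zeta|^p$ at infinity, which is exactly matched to the available integrability; and (ii) verifying that the Mellin transform $\widetilde{\Psi_{p,m}}(s)$ --- a ratio of Gamma factors --- has no zeros in the relevant strip, which is the precise point at which the hypothesis that $p$ is not an even integer becomes indispensable.
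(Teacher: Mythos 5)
The paper itself offers no proof of this lemma: it is quoted from Rudin's 1976 paper \cite{Rud76}, so your attempt has to be measured against Rudin's argument rather than anything in the text. Your global skeleton is correct and is the natural (and essentially standard) reduction: pushing $\mu,\nu$ forward under $I,J$ makes both conclusions equivalent to the single identity $P=Q$; restricting the hypothesis to $\alpha=t\beta$ and invoking Cram\'er--Wold (legitimate here, since $\{\mathrm{Re}\langle\beta,\cdot\rangle:\beta\in\mathbb{C}^n\}$ is exactly the set of real linear functionals on $\mathbb{R}^{2n}$) reduces everything to the one-variable uniqueness statement: a finite signed measure $\sigma$ on $\mathbb{C}$ with $\int|\zeta|^p\,d|\sigma|<\infty$ and $\int|1+t\zeta|^p\,d\sigma(\zeta)=0$ for all $t\in\mathbb{C}$ must vanish. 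Up to this point there is nothing to object to.

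The gap is in your treatment of that one-variable core, and your own stated asymptotics expose it. You note that $\Psi_{p,m}(x)\asymp x^{|m|}$ as $x\to 0$ and $\Psi_{p,m}(x)\asymp x^{p-|m|}$ as $x\to\infty$ (with nonzero constants $\binom{p/2}{|m|}$, since $p$ is not even). Consequently the Mellin transform $\widetilde{\Psi_{p,m}}(s)=\int_0^\infty x^{s-1}\Psi_{p,m}(x)\,dx$ converges absolutely only on the strip $-|m|<\mathrm{Re}\,s<|m|-p$, which is \emph{empty} whenever $2|m|\le p$ --- in particular for $m=0$, for every $p>0$. For these low angular modes there is no ``common vertical strip'': the Fubini computation underlying the factorization $\widetilde{\Psi_{p,m}}(s)\cdot\widehat{\tau_m}(s)=0$ has no domain of validity, so the argument says nothing about $\tau_m$ there, and hence nothing about $\sigma$. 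One can patch $m=0$ when $p<2$ by replacing $\Psi_{p,0}$ with $\Psi_{p,0}-1$ (using $\tau_0([0,\infty))=\sigma(\mathbb{C})=0$, obtained from $\alpha=0$), but for larger $p$ one must subtract further terms of the expansion whose coefficients are unknown moments of $\tau_m$, and proving that those moment terms vanish is essentially the theorem itself. This regularization step --- in Rudin's setting it appears as the fact that the tempered-distribution Fourier transform of $|z|^p$ is, for $p$ not an even integer, nonzero off the origin, so that the convolution identity $|z|^p*\sigma\equiv 0$ forces $\widehat\sigma\equiv 0$ --- is precisely the analytic heart of the result, and it is the part your proposal does not supply. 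Secondarily, even on the modes where your strip is nonempty, the zero-freeness of $\widetilde{\Psi_{p,m}}$ (``a ratio of Gamma factors'') is asserted rather than proved, and it is exactly where the hypothesis on $p$ must be used; as it stands this is a promissory note, not an argument.
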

		
		The following lemma is a direct corollary of Lemma \ref{lem:equ lp}.
		
		\begin{lem}[{\cite[Lemma 2.2]{DWZZ20}}]\label{lem: equi lp higher dimension}
			Let $ D_1$ and $ D_2$ be two bounded  domains in $\mathbb C^n$ and $\mathbb C^m$, repectively.
			Suppose that $\phi_k$, $k=0,1,2,\cdots,N$, $N\in \mathbb N$, are elements of $A^p( D_1)$ and
			suppose that $\psi_k$, $k=0,1,2,\cdots, N$ are elements of $A^p( D_2)$,
			such that for every $N$-tuple of complex numbers $\alpha_k$, $k=1,\cdots, N$,
			we have
			\begin{align*}
				\|\phi_0+\sum\limits_{k=1}^N\alpha_k\phi_k\|_p=\|\psi_0+\sum\limits_{k=1}^N\alpha_k\psi_k\|_p.
			\end{align*}
			If neither $\phi_0$ nor $\psi_0$ is constantly zero,
			then for every real valued non-negative Borel function $u:\mathbb C^N\ra \mathbb C$,
			we have
			\begin{align*}
				\int_{ D_1} u(\frac{\phi_1}{\phi_0},\cdots, \frac{\phi_N}{\phi_0})|\phi_0|^pd\lambda_n=\int_{ D_2} u(\frac{\psi_1}{\psi_0},\cdots, \frac{\phi_N}{\phi_0})|\psi_0|^pd\lambda_m.
			\end{align*}
		\end{lem}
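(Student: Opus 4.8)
The plan is to read the hypothesis at the level of $p$-th powers and then to absorb the weights $|\phi_0|^p$ and $|\psi_0|^p$ into the underlying measures, so that the statement becomes a verbatim instance of Rudin's Lemma \ref{lem:equ lp}. First I would raise the norm identity to the $p$-th power, rewriting the hypothesis as
\begin{align*}
\int_{D_1}\Big|\phi_0+\sum_{k=1}^N\alpha_k\phi_k\Big|^pd\lambda_n=\int_{D_2}\Big|\psi_0+\sum_{k=1}^N\alpha_k\psi_k\Big|^pd\lambda_m,\qquad\forall\,(\alpha_1,\dots,\alpha_N)\in\mc^N.
\end{align*}
Since $\phi_0$ is holomorphic and not identically zero, its zero set $Z_0:=\{\phi_0=0\}$ is a proper analytic subset of $D_1$ and hence Lebesgue-null; likewise $W_0:=\{\psi_0=0\}$ is null in $D_2$. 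This is what allows the quotients $\phi_k/\phi_0$ to be defined almost everywhere.

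Next I would introduce the finite positive measures $d\mu:=|\phi_0|^pd\lambda_n$ on $M:=D_1\setminus Z_0$ and $d\nu:=|\psi_0|^pd\lambda_m$ on $N:=D_2\setminus W_0$; these are finite because $\mu(M)=\|\phi_0\|_p^p<\infty$ and $\nu(N)=\|\psi_0\|_p^p<\infty$. On $M$ the functions $f_k:=\phi_k/\phi_0$ are well-defined and holomorphic, and they lie in $L^p(M,\mu)$ because $\int_M|f_k|^pd\mu=\int_M|\phi_k|^pd\lambda_n\le\|\phi_k\|_p^p<\infty$; the same computation gives $g_k:=\psi_k/\psi_0\in L^p(N,\nu)$. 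Factoring $|\phi_0+\sum\alpha_k\phi_k|^p=|\phi_0|^p\,|1+\sum\alpha_kf_k|^p$ on $M$ and discarding the null set $Z_0$ (and symmetrically on the $D_2$ side), the displayed identity turns into
\begin{align*}
\int_M\Big|1+\sum_{k=1}^N\alpha_kf_k\Big|^pd\mu=\int_N\Big|1+\sum_{k=1}^N\alpha_kg_k\Big|^pd\nu,\qquad\forall\,(\alpha_1,\dots,\alpha_N)\in\mc^N,
\end{align*}
which is exactly the hypothesis of Lemma \ref{lem:equ lp} with its integer $n$ taken to be $N$ (and with $p$ not an even integer, as assumed throughout).

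Rudin's lemma then yields the equimeasurability of $(f_1,\dots,f_N)$ and $(g_1,\dots,g_N)$, i.e.\ for every nonnegative Borel function $u:\mc^N\to\mc$,
\begin{align*}
\int_M u(f_1,\dots,f_N)\,d\mu=\int_N u(g_1,\dots,g_N)\,d\nu.
\end{align*}
Unwinding the definitions of $\mu,\nu,f_k,g_k$ and reinstating the null loci $Z_0,W_0$ (on which $|\phi_0|^p$ and $|\psi_0|^p$ vanish, so they carry no mass and the undefined quotients are harmless) produces the asserted identity over all of $D_1$ and $D_2$. I expect no deep obstacle here—this really is a direct corollary—so the only points demanding care are the measure-zero argument for the zero sets of $\phi_0$ and $\psi_0$, which is simultaneously what makes the quotients almost-everywhere defined and what lets me replace integration over $D_1,D_2$ by integration over $M,N$, together with the routine verification that the $f_k,g_k$ are genuinely $p$-integrable against the reweighted measures. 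Everything else is formal substitution.
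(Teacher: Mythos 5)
Your proof is correct and is exactly the route the paper intends: the paper states this lemma as a direct corollary of Rudin's Lemma \ref{lem:equ lp} (citing \cite{DWZZ20} for details), obtained precisely by absorbing $|\phi_0|^p$ and $|\psi_0|^p$ into the measures and applying equimeasurability to $f_k=\phi_k/\phi_0$, $g_k=\psi_k/\psi_0$ off the (Lebesgue-null) zero sets. You also rightly flag that the hypothesis that $p$ is not an even integer, though omitted in the statement as printed, must be carried over from Lemma \ref{lem:equ lp} for the argument to apply.
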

		
		%\begin{proof}
		%Set $d\mu:=|\phi_0|^pd\lambda_n$ and $d\nu:=|\psi_0|^pd\lambda_m$.
		%Then the measures $\mu$ and $\nu$ are well defined on $ D_1$ and $ D_2$, respectively.
		%Then from the assumption, we have that for $k=1,2,\cdots,N$,
		%$\frac{\phi_k}{\phi_0}\in A^p(d\mu)$ and $\frac{\psi_k}{\psi_0}\in A^p(d\nu)$, respectively, and that
		%\begin{align*}
		%\int_{ D_1}|1+\sum\limits_{k=1}^N\alpha_k\frac{\phi_k}{\phi_0}|^pd\mu=\int_{ D_2}|1+\sum\limits_{k=1}^N\alpha_k\frac{\psi_k}{\psi_0}|^pd\nu.
		%\end{align*}
		%Thus Lemma \ref{lem: equi lp higher dimension} follows from  Lemma \ref{lem:equ lp}.
		%\end{proof}
		
		\section{The holomorphic map associated to a linear isometry}\label{sec:construction of hol map }
		Let $D$ be a bounded domain in $\mathbb C^n$.
		For $p>0$, as in the introduction, we denote by  $A^p( D)$  the space of all holomorphic functions $\phi$ on $D$ with finite $L^p$-norm
		\begin{align*}
			\|\phi\|_p:=\left(\int_ D |\phi|^pd\lambda_n\right)^{1/p}.
		\end{align*}
		
		For two given bounded domains $D_1\subset\mc^n$ and $D_2\subset\mc^m$ and a linear isometry $T:A^p(D_1)\ra A^p(D_2)$ for some $p>0$,
		we try to construct an associated biholomorphic map $F:D_1\ra D_2$.
		
		By definition, a hyperplane in $A^p(D)$ is the kernel of a nonzero continuous linear functional on $A^p(D)$.
		For $z\in D$, the evaluation map
		$$A^p(D)\ra\mc;\ \phi\mapsto \phi(z)$$
		is a continuous linear functional on $A^p(D)$.
		So a natural way to connect points in $D$ and $A^p(D)$ is that:
		any point $z\in D$ corresponds to a hyperplane
		$$H_{D,z}:=\{\phi\in A^p(D)|\phi(z)=0\}$$
		of $A^p(D)$.
		Since $D$ is bounded, $A^p(D)$ separates points on $D$ and hence $z$ is uniquely determined by $H_{D,z}$.
		Let $\mathbb P(A^p(D))$ be the set of all hyperplanes in $A^p(D)$,
		we then get an injective map
		$$\sigma_D:D\ra \mathbb P(A^p(D)),\ z\mapsto H_{D,z}.$$
		
		From the linear isometry $T:A^p(D_1)\ra A^p(D_2)$, we get a natural bijective map
		$$T_*: \mathbb P(A^p(D_1))\ra \mathbb P(A^p(D_2)),\ H\mapsto T(H).$$
		We define subsets $D'_1\subset D_1$ and $D'_2\subset D_2$ as follows: for $z\in D_1, w\in D_2$,
		by definition, $z\in D'_1$ and $w\in D'_2$ if and only if $T(H_{D_1,z})=H_{D_2,w}$, namely,
		$T$ maps the hyperplane in $A^p(D_1)$ associated to $z$ to the hyperplane in $A^p(D_2)$ associated to $w$.
		Then we can define a bijective map $F:D'_1\ra D'_2$ by setting $F(z)=w$ provided $T(H_{D_1,z})=H_{D_2,w}$.
		In other words, $D'_1=\sigma^{-1}_{D_1}\circ T^{-1}_*\circ\sigma_{D_2}(D_2)$, $D'_2=\sigma^{-1}_{D_2}\circ T_*\circ \sigma_{D_1}(D_1)$,
		and $F=\sigma^{-1}_{D_2}\circ T_*\circ \sigma_{D_1}$.
		
		Other equivalent formulations of $D'_1$, $D'_2$ and $F$ are as follows.
		\begin{lem}\label{lem: F in another form}
			For $z\in D_1, w\in D_2$, the following conditions are equivalent:
			\bi
			\item[(1)] $z\in D'_1$, $w\in D'_2$, and $F(z)=w$,
			\item[(2)] $\phi_1(z)T\phi_2(w)=T\phi_1(w)\phi_2(z)$ for all $\phi_1,\phi_2\in A^p(D_1)$,
			\item[(3)] there exists $\lambda\in\mc^*$ such that $T\phi(w)=\lambda\phi(z)$ for all $\phi\in A^p(D_1)$.
			\ei
		\end{lem}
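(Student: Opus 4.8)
The plan is to establish the cycle of implications $(1)\Rightarrow(2)\Rightarrow(3)\Rightarrow(1)$. The natural starting point is a reformulation of condition (1). Since $T$ is a linear isomorphism, $T(H_{D_1,z})$ is again a hyperplane in $A^p(D_2)$, and a hyperplane, being a maximal proper subspace, coincides with any hyperplane that contains it. Hence $T(H_{D_1,z})=H_{D_2,w}$ holds if and only if $T$ carries every function vanishing at $z$ to a function vanishing at $w$; and because $T$ is bijective this is in turn equivalent to the biconditional
$$\phi(z)=0 \iff T\phi(w)=0,\qquad \forall\,\phi\in A^p(D_1).$$
I would record this equivalence first, since all three conditions are most conveniently compared against it.

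For $(1)\Rightarrow(2)$, given $\phi_1,\phi_2\in A^p(D_1)$ I would introduce the auxiliary function $\psi:=\phi_1(z)\phi_2-\phi_2(z)\phi_1$, which lies in $A^p(D_1)$ and satisfies $\psi(z)=0$. By the reformulation of (1) this gives $T\psi(w)=0$, and expanding $T\psi=\phi_1(z)\,T\phi_2-\phi_2(z)\,T\phi_1$ by linearity of $T$ yields exactly $\phi_1(z)T\phi_2(w)=\phi_2(z)T\phi_1(w)$, which is (2).

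For $(2)\Rightarrow(3)$, the key point is that $D_1$ is bounded, so the constant function $\mathbf 1$ belongs to $A^p(D_1)$ and does not vanish at $z$. Specializing (2) to $\phi_2=\mathbf 1$ gives $T\phi_1(w)=\lambda\,\phi_1(z)$ for every $\phi_1\in A^p(D_1)$, with $\lambda:=T\mathbf 1(w)$. It then remains to check $\lambda\neq 0$: if $\lambda=0$ then $T\phi(w)=0$ for all $\phi\in A^p(D_1)$, and since $T$ is surjective this would force every element of $A^p(D_2)$ to vanish at $w$, contradicting that the constant $\mathbf 1\in A^p(D_2)$ is nonzero at $w$. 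Finally $(3)\Rightarrow(1)$ is immediate: since $\lambda\neq 0$, the identity $T\phi(w)=\lambda\phi(z)$ gives the biconditional $\phi(z)=0\iff T\phi(w)=0$, which is precisely the reformulation of (1).

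I do not anticipate any serious obstacle here; the argument is essentially a bookkeeping exercise built on the linearity and bijectivity of $T$. The only two points requiring a little care are the hyperplane-equality reformulation of (1) (where one uses that an isomorphism sends hyperplanes to hyperplanes, together with the maximality of proper subspaces) and the non-vanishing of the scalar $\lambda$, both of which rely on $T$ being an isomorphism and on $A^p$ of a bounded domain containing the nonzero constants.
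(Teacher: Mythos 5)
Your proof is correct and follows essentially the same route as the paper: the paper identifies condition (1) with the statement that the evaluation functionals $l_z$ and $l_w\circ T$ have equal kernels, hence are proportional by a nonzero scalar (giving $(1)\Leftrightarrow(3)$ directly), and declares the remaining implications obvious. Your cycle $(1)\Rightarrow(2)\Rightarrow(3)\Rightarrow(1)$, with the auxiliary function $\phi_1(z)\phi_2-\phi_2(z)\phi_1$ and the constant function producing $\lambda=T\mathbf{1}(w)$, merely fills in those ``obvious'' steps explicitly and is a valid, complete argument.
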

		\begin{proof}
			Let $l_z:A^p(D_1)\ra\mc$ and $l_w:A^p(D_2)\ra\mc$ be the evaluation maps.
			Then $T_*(H_{D_1,z})=H_{D_2,w}$ if and only if $l_w(T\phi)=\lambda l_z(\phi)$ for some $\lambda\in\mc^*$.
			So (1) implies (3).
			Other implications are obvious.
		\end{proof}
		
		The following lemma is a direct consequence of Lemma \ref{lem: F in another form}, which implies that $F$ is a closed map.
		
		\begin{lem}\label{lem:F closed map}\
			\bi
			\item[(1)] Let $z_j$ be a sequence in $D'_1$ with $z_j\ra z\in D_1$ as $j\ra \infty$.
			If $w_j:=F(z_j)$ converges to some $w\in D_2$, then $z\in D'_1, w\in D'_2$ and $F(z)=w$.
			\item[(2)] For $z\in D'_1$ and $\phi\in A^p(D_1)$, $\phi(z)=0$ if and only if $T\phi(F(z))=0$.
			\ei
		\end{lem}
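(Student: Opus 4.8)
The plan is to deduce both statements directly from the characterizations in Lemma~\ref{lem: F in another form}, using only that elements of $A^p(D_1)$ and $A^p(D_2)$ are holomorphic, hence continuous on the respective domains.

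For part (1), I would start from the observation that each $z_j\in D'_1$ with $F(z_j)=w_j$, so the equivalence (1)$\Leftrightarrow$(2) of Lemma~\ref{lem: F in another form} yields the identity
$$\phi_1(z_j)\,T\phi_2(w_j)=T\phi_1(w_j)\,\phi_2(z_j)$$
for all $\phi_1,\phi_2\in A^p(D_1)$ and every $j$. Since $z_j\ra z$ in $D_1$ and $w_j\ra w$ in $D_2$, and since $\phi_1,\phi_2,T\phi_1,T\phi_2$ are continuous at the interior limit points, I can pass to the limit in each of the four factors to obtain
$$\phi_1(z)\,T\phi_2(w)=T\phi_1(w)\,\phi_2(z)$$
for all $\phi_1,\phi_2\in A^p(D_1)$. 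This is exactly condition (2) of Lemma~\ref{lem: F in another form} for the pair $(z,w)$, so the implication (2)$\Rightarrow$(1) gives $z\in D'_1$, $w\in D'_2$ and $F(z)=w$.

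For part (2), I would invoke the equivalence (1)$\Leftrightarrow$(3): since $z\in D'_1$ and $w:=F(z)$, there is a constant $\lambda\in\mc^*$ with $T\phi(w)=\lambda\phi(z)$ for every $\phi\in A^p(D_1)$. As $\lambda\neq 0$, the condition $T\phi(F(z))=0$ holds if and only if $\phi(z)=0$, which is precisely the asserted equivalence.

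I expect no serious obstacle here. The only step requiring a word of justification is the passage to the limit in part (1), and this is legitimate precisely because the convergence $z_j\ra z$ and $w_j\ra w$ takes place in the interiors of $D_1$ and $D_2$, where the holomorphic functions involved (together with their $T$-images) are continuous at the limit points. The content of the lemma is thus genuinely a formal consequence of Lemma~\ref{lem: F in another form}, with the bilinear formulation (2) being exactly the shape that survives taking pointwise limits.
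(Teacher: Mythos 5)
Your proof is correct and follows exactly the route the paper intends: the paper states this lemma without proof as ``a direct consequence of Lemma \ref{lem: F in another form}'', and your argument spells out precisely that deduction, using the bilinear condition (2) (which survives passing to limits by continuity of holomorphic functions at interior points) for part (1) and the equivalence with condition (3) (with $\lambda\in\mc^*$) for part (2). Nothing further is needed.
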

		
		To study $D'_1, D'_2$ and $F$ concretely, following the idea in \cite{Mark03}, we take an arbitrary countable dense
		subset of $A^p(D_1)$ and express $F$ in term of it, as follows.
		
		Let $\{\phi_j\}^{+\infty}_{j=0}$ be a countable dense subset of $A^p(D_1)$ and let $\psi_j=T\phi_j$,
		then $\{\psi_j\}^{+\infty}_{j=0}$ is a countable dense subset of $A^p(D_2)$.
		We assume that $\phi_0$ is not identically 0.
		
		Roughly speaking, we may view $[\phi_0(z):\phi_1(z):\cdots]$ as the homogenous coordinate on $\mathbb P(A^p(D_1))$,
		and view $\left(\frac{\phi_1(z)}{\phi_0(z)}, \frac{\phi_2(z)}{\phi_0(z)},\cdots \right)$ as the inhomogenous  coordinate on $\mathbb P(A^p(D_1))$.
		
		For $N\in \mathbb N_+$, we define maps $I_N, J_N$ as follows:
		$$I_N: D_1\ra\mc^N,\ z\mapsto \left(\frac{\phi_1(z)}{\phi_0(z)},\cdots, \frac{\phi_N(z)}{\phi_0(z)}\right),$$
		$$J_N: D_2\ra\mc^N,\ w\mapsto \left(\frac{\psi_1(w)}{\psi_0(w)},\cdots, \frac{\psi_N(w)}{\psi_0(w)}\right),$$
		which are measurable maps on $ D_1$ and $ D_2$ respectively.
		We can also define $I_\infty$ and $J_\infty$ as
		$$I_\infty: D_1\ra\mc^\infty,\ z\mapsto \left(\frac{\phi_1(z)}{\phi_0(z)}, \frac{\phi_2(z)}{\phi_0(z)}, \cdots\right),$$
		$$J_\infty: D_2\ra\mc^\infty,\ w\mapsto \left(\frac{\psi_1(w)}{\psi_0(w)}, \frac{\psi_2(w)}{\psi_0(w)}, \cdots\right).$$
		Indeed, $I_N, I_\infty$ are defined on $D_1\backslash \phi^{-1}_0(0)$, and $J_N, J_\infty$ are defined on $D_2\backslash \psi^{-1}_0(0)$. Since $A^p(D_i)$ seperates points in $D_i$, $i=1,2$, the maps $I_\infty$ and $J_\infty$ are injective.
		
		By Lemma \ref{lem: F in another form}, for $z\in D_1\backslash \phi^{-1}_0(0)$ and $w\in D_2\backslash \psi^{-1}_0(0)$,
		we have $z\in D'_1, w\in D'_2$ and $F(z)=w$ if and only if $I_\infty(z)=J_\infty(w)$.
		For $z\in D'_1\backslash \phi^{-1}_0(0)$, we have
		$$F(z)=J^{-1}_\infty(I_\infty(z)).$$
		It is clear that
		$$D'_1\backslash\phi^{-1}_0(0)=\cap_N I^{-1}_N J_N( D_2\setminus\psi_0^{-1}(0))=I_\infty^{-1}J_\infty( D_2\setminus\psi_0^{-1}(0)),$$
		$$ D'_2\backslash\psi^{-1}_0(0)=\cap_N J_N^{-1}I_N( D_1\setminus\phi_0^{-1}(0))=J_\infty^{-1}I_\infty( D_1\setminus\phi_0^{-1}(0)).$$

		We now assume that $p>0$ is a real number and $p$ is not an even integer.
		The following results are proved in \S 2.2 in \cite{DWZZ20}.
		
		\begin{lem}\label{lem: D' full measure}
			The Lebesgue measure of $ D_j\backslash D'_j$ are zero, for $j=1, 2$.
		\end{lem}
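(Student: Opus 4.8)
The plan is to transfer the statement into an equality of push-forward measures and then exploit the description $D'_1\setminus\phi_0^{-1}(0)=\bigcap_N I_N^{-1}J_N(D_2\setminus\psi_0^{-1}(0))$ recorded above. Set $\mu_1:=|\phi_0|^p\,d\lambda_n$ on $D_1$ and $\mu_2:=|\psi_0|^p\,d\lambda_m$ on $D_2$. Since $\phi_0\in A^p(D_1)$ and $\psi_0=T\phi_0\in A^p(D_2)$, these are finite measures; and since $T$ is injective with $\phi_0\not\equiv 0$, we have $\psi_0\not\equiv 0$, so $\phi_0^{-1}(0)$ and $\psi_0^{-1}(0)$ are Lebesgue-null. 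Because $T$ is a linear isometry, $\|\phi_0+\sum_{k=1}^N\alpha_k\phi_k\|_p=\|\psi_0+\sum_{k=1}^N\alpha_k\psi_k\|_p$ for all $\alpha\in\mc^N$, so Lemma~\ref{lem: equi lp higher dimension} applies for each $N$. Taking $u$ to be indicator functions, it gives $(I_N)_*\mu_1=(J_N)_*\mu_2$ as finite Borel measures on $\mc^N$; comparing total masses, $\mu_1(D_1)=\|\phi_0\|_p^p=\|\psi_0\|_p^p=\mu_2(D_2)$.

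The key step is then to feed the sets $B_N:=J_N(D_2\setminus\psi_0^{-1}(0))\subset\mc^N$ into this identity. Since $J_N^{-1}(B_N)=D_2\setminus\psi_0^{-1}(0)$ by construction,
$$\mu_1\big(I_N^{-1}(B_N)\big)=(I_N)_*\mu_1(B_N)=(J_N)_*\mu_2(B_N)=\mu_2\big(D_2\setminus\psi_0^{-1}(0)\big)=\mu_2(D_2)=\mu_1(D_1),$$
so each $I_N^{-1}(B_N)$ is a $\mu_1$-co-null subset of $D_1$. As $D'_1\setminus\phi_0^{-1}(0)=\bigcap_N I_N^{-1}(B_N)$ is a countable intersection of co-null sets, it is co-null; combined with $\mu_1(\phi_0^{-1}(0))=0$ this yields $\mu_1(D_1\setminus D'_1)=0$. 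Finally, since $|\phi_0|^p>0$ off the Lebesgue-null set $\phi_0^{-1}(0)$, a set is $\mu_1$-null exactly when it is Lebesgue-null, whence $\lambda_n(D_1\setminus D'_1)=0$. Repeating the argument verbatim with the isometry $T^{-1}:A^p(D_2)\ra A^p(D_1)$ gives $\lambda_m(D_2\setminus D'_2)=0$.

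The step demanding care, which I expect to be the main obstacle, is the measurability of the test sets $B_N$: a priori $B_N$ is only the continuous image of a Borel set, hence merely analytic in $\mc^N$, so Lemma~\ref{lem: equi lp higher dimension} (stated for Borel data) does not apply to $u=\mathbf 1_{B_N}$ without justification. This is resolved by observing that $D_2\setminus\psi_0^{-1}(0)$ is open in $\mc^m$, hence $\sigma$-compact, and that $J_N$ is continuous there; thus $B_N$ is a countable union of compact sets, i.e.\ an $F_\sigma$ set, and in particular Borel. The conceptual content of the proof lies entirely in the choice of these test sets: pushing the $J_N$-image of the whole domain back through the equimeasurability forces $I_N^{-1}(B_N)$ to carry the full mass $\mu_1(D_1)$, which is precisely what collapses $D_1\setminus D'_1$ to a null set. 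Everything else is routine null-set bookkeeping.
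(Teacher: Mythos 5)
Your proof is correct, and it is essentially the argument behind the paper's own proof of this lemma, which is just a citation to \S 2.2 of \cite{DWZZ20}: apply Lemma \ref{lem: equi lp higher dimension} (Rudin's equimeasurability) with indicator functions to get $(I_N)_*\mu_1=(J_N)_*\mu_2$, observe that $B_N=J_N(D_2\setminus\psi_0^{-1}(0))$ is $\sigma$-compact hence Borel, conclude that each $I_N^{-1}(B_N)$ is $\mu_1$-co-null, and intersect over $N$. One caveat deserves emphasis. The step converting co-nullity of $\bigcap_N I_N^{-1}(B_N)$ into co-nullity of $D_1'$ rests on the identity $D_1'\setminus\phi_0^{-1}(0)=\bigcap_N I_N^{-1}(B_N)$, which you quote from the paper; of its two containments only ``$\subseteq$'' is actually obvious, while the one your argument consumes, $\bigcap_N I_N^{-1}(B_N)\subseteq D_1'$, asserts that if the functions $\phi_0(z)\psi_j-\phi_j(z)\psi_0$, $1\le j\le N$, have a common zero in $D_2\setminus\psi_0^{-1}(0)$ for every $N$, then the full infinite family does --- a compactness-type statement that is not automatic, since a decreasing sequence of nonempty analytic subsets of a bounded domain can have empty intersection (the finite-stage zeros could escape to $\partial D_2$). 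You are entitled to cite the identity because the paper records it before the lemma, but a self-contained proof should instead run your pushforward argument with $I_\infty$ and $J_\infty$ into $\mathbb{C}^{\mathbb{N}}$ equipped with the product topology: $J_\infty(D_2\setminus\psi_0^{-1}(0))$ is still $\sigma$-compact hence Borel, the measures $(I_\infty)_*\mu_1$ and $(J_\infty)_*\mu_2$ coincide because they agree on the cylinder sets, which form a $\pi$-system generating the Borel $\sigma$-algebra, and the identity $D_1'\setminus\phi_0^{-1}(0)=I_\infty^{-1}J_\infty(D_2\setminus\psi_0^{-1}(0))$ is genuinely immediate from Lemma \ref{lem: F in another form}. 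Everything else in your write-up --- finiteness and equality of the total masses, the Borel-measurability fix for $B_N$, the equivalence of $\mu_1$-null and Lebesgue-null sets off $\phi_0^{-1}(0)$, and the symmetric argument via $T^{-1}$ --- is correct.
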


		\begin{lem}\label{lem:equi dimension}
			The dimensions of $ D_1$ and $ D_2$ are equal, namely $n=m$.
		\end{lem}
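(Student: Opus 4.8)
The plan is to read off the dimension from the images of the coordinate maps $I_N$ and $J_N$ in $\mc^N$. Recall from the identities displayed just before Lemma \ref{lem: D' full measure} that for $z\in D_1'\setminus\phi_0^{-1}(0)$ one has $I_\infty(z)=J_\infty(F(z))$ with $F(z)\in D_2'\setminus\psi_0^{-1}(0)$, and, by Lemma \ref{lem:F closed map}(2) (which gives $\phi_0(z)=0\iff\psi_0(F(z))=0$), that $F$ restricts to a bijection between $D_1'\setminus\phi_0^{-1}(0)$ and $D_2'\setminus\psi_0^{-1}(0)$. Projecting onto the first $N$ coordinates gives $I_N(z)=J_N(F(z))$, whence the set identity
$$I_N\big(D_1'\setminus\phi_0^{-1}(0)\big)=J_N\big(D_2'\setminus\psi_0^{-1}(0)\big)\subset\mc^N$$
for every $N$. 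It therefore suffices to compute the real Hausdorff dimensions of these images: I will show they equal $2n$ and $2m$ respectively for $N$ large, and the displayed identity then forces $n=m$.

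The key point is that the generic rank of the holomorphic map $I_N\colon D_1\setminus\phi_0^{-1}(0)\ra\mc^N$ equals $n$ once $N$ is large enough. Indeed, the generic rank of $I_N$ is nondecreasing in $N$; call its limit $r\le n$. If $r<n$, then at a generic point all $I_N$ ($N\ge N_0$) have locally constant rank $r$, so by the constant rank theorem each further component $\phi_j/\phi_0$ is locally a holomorphic function of $I_{N_0}$, i.e. $I_\infty$ is constant along a fiber of positive dimension $n-r$. This contradicts the injectivity of $I_\infty$ recorded in the construction above. Hence $r=n$, and there is an $N_0$ with $\mathrm{rank}\,dI_N\equiv n$ generically for $N\ge N_0$. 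For such $N$ the rank-deficiency locus is a proper analytic subset of $D_1\setminus\phi_0^{-1}(0)$, hence Lebesgue-null, and off it $I_N$ is an immersion, so its image is locally an $n$-dimensional complex submanifold of $\mc^N$; thus $I_N(D_1\setminus\phi_0^{-1}(0))$, being a Lipschitz image of an open subset of $\mc^n\cong\mr^{2n}$ that contains such a piece, has Hausdorff dimension exactly $2n$. The identical argument applied to $J_N$ gives Hausdorff dimension exactly $2m$ for $N$ large.

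Finally, since holomorphic maps are locally Lipschitz, they carry Lebesgue-null (equivalently $2n$-Hausdorff-null) sets to $2n$-Hausdorff-null sets. By Lemma \ref{lem: D' full measure} the sets $D_1\setminus D_1'$ and $D_2\setminus D_2'$ are Lebesgue-null, as are $\phi_0^{-1}(0)$, $\psi_0^{-1}(0)$ and the rank-deficiency loci; discarding them therefore changes neither side's Hausdorff dimension. Choosing $N$ large enough for both maps, the set identity of the first paragraph yields $2n=2m$, i.e. $n=m$. I expect the rank computation of the second paragraph to be the main obstacle: one must convert the pointwise separation property of $\{\phi_j\}$ (injectivity of $I_\infty$) into an \emph{infinitesimal} statement, namely full generic rank of some finite truncation $I_N$, and then verify that restricting to the measurable full-measure subsets $D_j'$ and excising the zero sets of $\phi_0,\psi_0$ does not corrupt the dimension count; both are handled by the local Lipschitz property of holomorphic maps.
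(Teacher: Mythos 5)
Your proof is correct, but it does not follow the paper's route. The paper gives no self-contained argument for this lemma: it defers Lemmas \ref{lem: D' full measure}--\ref{lem:T1=Jacobi} to \S 2.2 of \cite{DWZZ20}, where the dimension equality is extracted from Rudin's equimeasurability theorem (Lemma \ref{lem:equ lp}, via Lemma \ref{lem: equi lp higher dimension}): there one takes $u$ to be an indicator function, transfers the positive mass of $|\phi_0|^p d\lambda_n$ lying over the image $I_N(D_1\setminus\phi_0^{-1}(0))$ to a subset of $D_2$ of positive measure, and rules this out by a rank/Hausdorff-dimension count when $n\neq m$. You bypass the equimeasurability machinery entirely: you combine the previously stated Lemma \ref{lem: D' full measure} with the correspondence $F$ (via Lemma \ref{lem: F in another form} and Lemma \ref{lem:F closed map}(2)) to obtain the exact set identity $I_N\bigl(D_1'\setminus\phi_0^{-1}(0)\bigr)=J_N\bigl(D_2'\setminus\psi_0^{-1}(0)\bigr)$, and then run the dimension count on both sides of a single equation of sets. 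The dimension-theoretic core is the same in both arguments, and it is the part you rightly flag as the main obstacle: converting injectivity of $I_\infty$ (respectively $J_\infty$) into full generic rank of a finite truncation. Your constant-rank argument for this is sound, with one small point to make explicit: the neighbourhoods on which $\mathrm{rank}\,dI_N\equiv r$ shrink as $N$ grows, so for fixed $j$ one gets that $\phi_j/\phi_0$ has vanishing differential along the fiber of $I_{N_0}$ only on a neighbourhood of the chosen generic point; one should then invoke the identity theorem on the connected local fiber to conclude $\phi_j/\phi_0$ is constant on the whole local fiber, after which the contradiction with injectivity of $I_\infty$ is immediate. What your route buys is economy: granted the lemmas already recorded earlier in Section \ref{sec:construction of hol map }, no further measure theory is needed (it is hidden in Lemma \ref{lem: D' full measure}, whose proof in \cite{DWZZ20} does use Rudin's lemma). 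What the cited route buys is directness: equimeasurability yields the measure transfer in one stroke, without first packaging the correspondence $F$ and the bijection between $D_1'\setminus\phi_0^{-1}(0)$ and $D_2'\setminus\psi_0^{-1}(0)$.
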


		\begin{lem}\label{lem: D' open}
			$ D'_j$ are open subsets of $ D_j$, for $j=1, 2$.
		\end{lem}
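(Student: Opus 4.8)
The plan is to prove the statement for $D'_1$ (the case of $D'_2$ being symmetric), by showing that every $z_0\in D'_1$ has an open neighborhood contained in $D'_1$. Fix such a $z_0$ and set $w_0:=F(z_0)\in D'_2$. Since the constant $1$ and the coordinate functions lie in $A^p(D_1)$ (as $D_1$ is bounded), after relabeling the dense sequence $\{\phi_j\}$ we may assume $\phi_0(z_0)\neq 0$; then $\psi_0(w_0)=\lambda\phi_0(z_0)\neq 0$ automatically by Lemma \ref{lem: F in another form}(3). I would first check that the inhomogeneous map $I_\infty$ has injective differential at $z_0$: for any putative kernel vector $v$, the functional $\phi\mapsto d(\phi/\phi_0)(z_0)(v)$ is continuous (by the Cauchy estimates, $A^p$-convergence forces local uniform convergence of all derivatives) and vanishes on the dense family $\{\phi_j\}$, hence on all of $A^p(D_1)$; testing it against $1$ and the coordinate functions then forces $v=0$. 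Consequently there is an integer $N$ and a connected neighborhood $U\ni z_0$ such that $I_N|_U$ is a biholomorphism onto an $n$-dimensional complex submanifold $S_1\ni c_0:=I_N(z_0)$; enlarging $N$, the same argument applied to $\{\psi_j\}$ (dense in $A^p(D_2)$, with $m=n$ by Lemma \ref{lem:equi dimension}) gives a neighborhood $V\ni w_0$ with $J_N|_V$ a biholomorphism onto a submanifold $S_2\ni c_0$.

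The a priori worry is that $F$ could move points near $z_0$ toward $\partial D_2$, so that the identity $I_N=J_N\circ F$ on $D'_1$ need not stay inside $V$. I would dispatch this by proving $F$ continuous at $z_0$. For any $\psi\in A^p(D_2)$, writing $\phi=T^{-1}\psi$ and applying Lemma \ref{lem: F in another form}(3) at $z\in D'_1$ gives $\psi(F(z))/\psi_0(F(z))=\phi(z)/\phi_0(z)$; hence if $z\ra z_0$ in $D'_1$ the left side converges to $\phi(z_0)/\phi_0(z_0)$. Taking $\psi\equiv 1$ yields $\psi_0(F(z))\ra\psi_0(w_0)$, and taking $\psi$ to be each coordinate function of $\mc^m$ then yields $F(z)\ra w_0$ coordinatewise. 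Thus $F(z)\in V$ for all $z\in D'_1$ in a smaller neighborhood $U'\subseteq U$ of $z_0$. Since $U'\setminus D'_1$ is Lebesgue-null by Lemma \ref{lem: D' full measure}, the set $U'\cap D'_1$ is dense in $U'$, and for $z$ in it we get $I_N(z)=J_N(F(z))\in S_2$; by continuity of $I_N$ and the local closedness of $S_2$ this gives $I_N(U')\subseteq S_2$, i.e.\ $S_1\subseteq S_2$ near $c_0$, whence $S_1=S_2$ as germs at $c_0$ by equality of dimensions.

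With $S_1=S_2$ I would set $\widetilde F:=(J_N|_V)^{-1}\circ I_N|_{U'}:U'\ra V$, a holomorphic map agreeing with $F$ on the dense set $U'\cap D'_1$. The key remaining point is to upgrade agreement of the \emph{first $N$} coordinates to agreement of \emph{all} coordinates, and here the identity principle does the work: for every $l\geq 1$ the two holomorphic functions $z\mapsto\phi_l(z)/\phi_0(z)$ and $z\mapsto\psi_l(\widetilde F(z))/\psi_0(\widetilde F(z))$ on the connected set $U'$ coincide on the dense subset $U'\cap D'_1$ (where $\widetilde F=F$ and $I_\infty=J_\infty\circ F$), hence coincide on all of $U'$. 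Therefore $I_\infty(z)=J_\infty(\widetilde F(z))$ for every $z\in U'$, which by the characterization recorded after Lemma \ref{lem: F in another form} means precisely that $z\in D'_1$ with $F(z)=\widetilde F(z)$. Thus $U'\subseteq D'_1$, proving that $D'_1$ is open, and $D'_2$ is open by the symmetric argument. I expect the genuine obstacle to be exactly this last bootstrap: controlling the infinitely many coordinates $\phi_l/\phi_0$ that are invisible to the finite embedding $I_N$, which is why reducing to an honestly open, connected $U'$ on which the identity principle applies (rather than merely to a set of full measure) is the crux of the argument.
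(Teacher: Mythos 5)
Your proof is correct. Note that the paper does not actually prove Lemma \ref{lem: D' open} in the text; it quotes it from \S 2.2 of \cite{DWZZ20}, and your argument follows essentially the same route as that cited proof: finite local embeddings $I_N$, $J_N$ obtained from injectivity of $dI_\infty$ and $dJ_\infty$ at the base points (via continuity of the evaluation and derivative functionals on $A^p$, which follows from the sub-mean-value inequality and Cauchy estimates), density of $U'\cap D_1'$ in $U'$ supplied by Lemma \ref{lem: D' full measure}, and the bootstrap from the first $N$ coordinates to the full identity $I_\infty=J_\infty\circ\widetilde F$, which by the characterization following Lemma \ref{lem: F in another form} certifies $U'\subseteq D_1'$. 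Two minor remarks: the step you attribute to the identity principle needs only continuity together with density of $U'\cap D_1'$ (a set of full measure is dense, and two continuous functions agreeing on a dense set agree everywhere); and your continuity-of-$F$ argument, which you correctly identify as the point where the proof could break (a priori $F$ might push points near $z_0$ toward $\partial D_2$), amounts to the formula $F_i=T^{-1}(w_i)/T^{-1}(1)$ on $D_1'$ near $z_0$ --- a ratio of two fixed holomorphic functions --- which is precisely the expression for $F$ that the paper itself exploits later in Section \ref{sec:A^p complete}, where $F=I_n$ on $D_1'\setminus A_1$.
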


		\begin{lem}\label{lem:T1=Jacobi}
			For $z\in D'_1$ and $\phi\in A^p( D_1)$, we have
			$$|T(\phi)(F(z))||J_F(z)|^{2/p}=|\phi(z)|, \ \phi\in A^p( D_1), z\in D'_1,$$
			where $J_F$ is the holomorphic Jacobian of $F$.
		\end{lem}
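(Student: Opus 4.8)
The plan is to reduce the claimed identity to the determination of the modulus of the multiplier that $F$ carries. Indeed, for $z\in D'_1$, part (3) of Lemma \ref{lem: F in another form} provides a constant $\lambda(z)\in\mc^*$, depending only on $z$, with $T\phi(F(z))=\lambda(z)\phi(z)$ for every $\phi\in A^p(D_1)$. Taking moduli, the assertion $|T\phi(F(z))||J_F(z)|^{2/p}=|\phi(z)|$ becomes equivalent to the single scalar identity $|\lambda(z)|=|J_F(z)|^{-2/p}$, which no longer involves $\phi$. Hence it suffices to verify the claim for one function that does not vanish at the point under consideration; I would first work with the base function $\phi_0$ of the fixed dense family on $D'_1\setminus\phi_0^{-1}(0)$, and recover the remaining points afterwards.

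To compute $|\lambda(z)|$ I would extract a change-of-measure formula from the isometry. Since $T$ is a linear isometry, each finite subfamily of $\{\phi_j\}$ and its image $\{\psi_j=T\phi_j\}$ satisfies the hypothesis of Lemma \ref{lem: equi lp higher dimension}, which yields, for every nonnegative Borel $u$ and every $N$,
\[
\int_{D_1}u(I_N(z))\,|\phi_0(z)|^p\,d\lambda_n=\int_{D_2}u(J_N(w))\,|\psi_0(w)|^p\,d\lambda_m .
\]
In the language of pushforwards this says $(I_\infty)_*\!\big(|\phi_0|^p d\lambda_n\big)=(J_\infty)_*\!\big(|\psi_0|^p d\lambda_m\big)$ on $\mc^\infty$ (the infinite version following from the finite ones by a routine monotone-class argument on cylinder sets). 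Recalling that $I_\infty=J_\infty\circ F$ on $D'_1\setminus\phi_0^{-1}(0)$, a set of full Lebesgue measure by Lemma \ref{lem: D' full measure}, I would substitute this into the left-hand pushforward and use the injectivity of $J_\infty$ to cancel it, arriving at the genuine pushforward identity $F_*\!\big(|\phi_0|^p d\lambda_n\big)=|\psi_0|^p d\lambda_m$ as measures on $D'_2$.

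With the measure identity in hand I would invoke the holomorphic change of variables for the biholomorphism $F:D'_1\to D'_2$ (open by Lemma \ref{lem: D' open}), whose real Jacobian is $|J_F|^2$. Rewriting $F_*\big(|\phi_0|^p d\lambda_n\big)$ as an integral over $D'_2$ produces the density $|\phi_0(F^{-1}(w))|^p|J_F(F^{-1}(w))|^{-2}$, and matching it against the density $|\psi_0(w)|^p$ gives $|\psi_0(F(z))|^p|J_F(z)|^2=|\phi_0(z)|^p$ for almost every $z$, that is $|T\phi_0(F(z))||J_F(z)|^{2/p}=|\phi_0(z)|$. Both sides are continuous on $D'_1$, since $J_F$ is holomorphic and nowhere vanishing there, so the a.e.\ identity holds everywhere on $D'_1$. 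This yields $|\lambda(z)|=|J_F(z)|^{-2/p}$ wherever $\phi_0(z)\neq0$; because $\lambda(z)$ is intrinsic to $z$, the same argument run with any function nonvanishing at a prescribed point (for instance the constant $1\in A^p(D_1)$, which lies in $A^p(D_1)$ as $D_1$ is bounded) covers all of $D'_1$, and the stated identity then follows for every $\phi$.

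The step I expect to be the main obstacle is the passage from the equimeasurability, which a priori only records equality of the pushed-forward measures on the coordinate spaces $\mc^N$ and $\mc^\infty$, to the honest pushforward identity $F_*\!\big(|\phi_0|^p d\lambda_n\big)=|\psi_0|^p d\lambda_m$ on $D'_2$. Making this rigorous requires exploiting the injectivity of $J_\infty$ to strip it off both sides, while simultaneously controlling the three null sets at play, namely $D_1\setminus D'_1$, $\phi_0^{-1}(0)$, and $\psi_0^{-1}(0)$, so that the relation $I_\infty=J_\infty\circ F$ and the subsequent change of variables are used only where valid. Once this measure-theoretic bookkeeping is settled, the remainder is the standard Jacobian computation together with the continuity upgrade described above.
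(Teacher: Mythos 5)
Your proposal is correct and follows essentially the intended argument: the paper itself does not reprove this lemma but quotes it from \S 2.2 of \cite{DWZZ20}, and the proof there is exactly your route---apply the equimeasurability statement (Lemma \ref{lem: equi lp higher dimension}) to a dense family, pass to the pushforward identity for $I_\infty$ and $J_\infty$, use injectivity of $J_\infty$ together with $I_\infty=J_\infty\circ F$ on the full-measure set $D_1'\setminus\phi_0^{-1}(0)$ to see that $F$ transports $|\phi_0|^p d\lambda_n$ to $|\psi_0|^p d\lambda_n$, and conclude by the holomorphic change of variables and continuity. Your preliminary reduction via the multiplier $\lambda(z)$ from Lemma \ref{lem: F in another form}(3), and the patch at the zeros of $\phi_0$ by rerunning the argument with the constant function $1$ (legitimate since $F$ is defined independently of the chosen dense family, and $T(1)(F(z))\neq 0$ on $D_1'$ by Lemma \ref{lem:F closed map}(2)), are both sound.
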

		
		\section{$A^p$-completeness of bounded domains}\label{sec:A^p complete}
		We first introduce a new notion, namely, $A^p$-completeness of bounded domains.
		%We follow the notations and definitions in the previous section.
		%From now on, we always assume $p>0$ is a real number and $p$ is not an even integer.
		
		\begin{defn}\label{def:A^p complete}
			Let $D\subset\mc^n$ be a bounded domain and $p>0$.
			We say that $D$ is \emph{$A^p$-complete} if there does not exist a domain $\tilde D$
			with $D\subsetneqq \tilde D$ such that the restriction map
			$i:A^p(\tilde{D})\rightarrow A^p(D)$ is an isometry.
		\end{defn}
		
		The following two basic facts related to  $A^p$-completeness is obvious:
		\bi
		\item[(1)] $D$ must be $A^p$-complete for any $p>0$ if $\mathring{\overline{D}}=D$, namely, the interior of the closure of $D$ is $D$ itself;
		\item[(2)] if the $p$-Bergman kernel $B_{D,p}$ of $D$ is exhaustive, then $D$ is $A^p$-complete.
		\ei
		
		It is proved in \cite{NZZ16} that for $p<2$ the $p$-Bergman kernel on any bounded pseudoconvex domain is exhaustive,
		it follows that all bounded pseudoconvex domains in $\mc^n$ are $A^p$-complete for any $0<p<2$.

		Assume that $D_1$ and $D_2$ are bounded domains in $\mc^n$, and $T:A^p(D_1)\ra A^p(D_2)$ is a linear isometry.
		We now take a dense countable subset $\{\phi_j\}_{j\geq 0}$ of $A^p(D_1)$,
		such that $\psi_0=1,\psi_1=w_1, \cdots, \psi_n=w_n$, and $(w_1,\cdots, w_n)$ are the natural linear coordinates on $D_2$.
		We denote the zero set $\phi_0^{-1}(0)$ of $\phi_0=T^{-1}(1)\in A^p(D_1)$ by $A_1$.
		Then it is clear that $F=I_n=\left(\frac{\phi_1(z)}{\phi_0(z)},\cdots, \frac{\phi_n(z)}{\phi_0(z)}\right)$ on $D'_1\backslash A_1$.
		
		\begin{prop}\label{prop:domain of F}
			Assume that $D_1$ and $D_2$ are bounded domains in $\mc^n$, and $T:A^p(D_1)\ra A^p(D_2)$ is a linear isometry.
			If $D_2$ is $A^p$-complete,
			then $D'_1=D_1\backslash A_1$.
		\end{prop}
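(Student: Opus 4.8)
The plan is to prove the two inclusions separately, the inclusion $D_1'\subseteq D_1\setminus A_1$ being elementary and the reverse inclusion carrying all the weight.

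First I would dispose of $D_1'\subseteq D_1\setminus A_1$. If $z\in D_1'$ and $w=F(z)$, then by Lemma~\ref{lem: F in another form}(3) there is $\lambda\in\mc^*$ with $T\phi(w)=\lambda\phi(z)$ for all $\phi\in A^p(D_1)$; applying this to $\phi=\phi_0=T^{-1}(1)$ and using $T\phi_0=\psi_0\equiv 1$ gives $1=\lambda\phi_0(z)$, so $\phi_0(z)\neq 0$, i.e. $z\notin A_1$. For the reverse inclusion I would first reduce it to a single statement about $I_n$. Since $\phi_0$ does not vanish on $D_1\setminus A_1$, the map $I_n=(\phi_1/\phi_0,\dots,\phi_n/\phi_0)$ is holomorphic there, and $D_1\setminus A_1$ is connected because $A_1=\phi_0^{-1}(0)$ is a proper analytic hypersurface. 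As $\psi_0\equiv1$ and $\psi_k=w_k$, the map $J_n$ is the identity on $D_2$, so the description of $D_1'$ recalled before the statement becomes: $z\in D_1'$ iff $I_n(z)\in D_2$ and $\phi_j/\phi_0=\psi_j\circ I_n$ at $z$ for every $j$. The matching identities hold on the nonempty open set $D_1'$ and, both sides being holomorphic, propagate to all of the connected set $D_1\setminus A_1$ by the identity theorem as soon as $I_n$ maps into $D_2$. Hence the whole problem reduces to showing $I_n(D_1\setminus A_1)\subseteq D_2$.

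Next I would locate the image in $\overline{D_2}$. Given $z_0\in D_1\setminus A_1$, choose $z_j\in D_1'$ with $z_j\to z_0$ (possible since $D_1'$ is open, Lemma~\ref{lem: D' open}, and of full measure, Lemma~\ref{lem: D' full measure}); then $F(z_j)=I_n(z_j)\to I_n(z_0)\in\overline{D_2}$, and if $I_n(z_0)\in D_2$ the closed-map Lemma~\ref{lem:F closed map}(1) forces $z_0\in D_1'$. Thus $I_n(D_1\setminus A_1)\subseteq\overline{D_2}$, and it remains only to exclude the possibility that some $z_*\in D_1\setminus A_1$ has $w_*:=I_n(z_*)\in\partial D_2$.

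This exclusion is where $A^p$-completeness of $D_2$ is used, and it is the main obstacle. The idea is to manufacture from such a $z_*$ a domain $\tilde D_2$ with $D_2\subsetneq\tilde D_2\subseteq\overline{D_2}$ and $\tilde D_2\setminus D_2$ of measure zero to which every element of $A^p(D_2)$ extends holomorphically, contradicting Definition~\ref{def:A^p complete}. Concretely, near a point $z_*$ where $J_{I_n}(z_*)\neq 0$ the map $I_n$ restricts to a biholomorphism of a neighborhood $V_0$ onto an open $W_0\ni w_*$ with inverse $g$; since $W_0\subseteq\overline{D_2}$, one gets $W_0\setminus D_2=I_n(V_0\cap(D_1\setminus D_1'))$, which is null because $I_n|_{V_0}$ is a diffeomorphism and $D_1\setminus D_1'$ is null. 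For $\psi\in A^p(D_2)$ write $\psi=T\phi$ (possible as $T$ is onto) and set $\tilde\psi:=(\phi/\phi_0)\circ g$, which is holomorphic on $W_0$; since $g=G=F^{-1}$ on the nonempty open set $I_n(V_0\cap D_1')$ (there $g(I_n(z))=z=G(I_n(z))$ for $z\in V_0\cap D_1'$), we get $\tilde\psi=\psi$ there, hence on all of $W_0\cap D_2$ by the identity theorem. Thus every $\psi$ extends to $\tilde D_2:=D_2\cup W_0$, and as $\tilde D_2\setminus D_2$ is null the restriction $A^p(\tilde D_2)\to A^p(D_2)$ is a surjective linear isometry, contradicting $A^p$-completeness. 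The delicate technical points, which I expect to be the crux, are (i) handling a boundary point $z_*$ where $J_{I_n}$ vanishes---there $I_n$ is only a branched covering, so one must either select $z_*$ outside the analytic set $\{J_{I_n}=0\}$ or run the branched-cover version of the construction---and (ii) the connectivity needed to push the equality $\tilde\psi=\psi$ from $I_n(V_0\cap D_1')$ to all of $W_0\cap D_2$. Granting these, $I_n(D_1\setminus A_1)\subseteq D_2$, and with the reduction above this yields $D_1'=D_1\setminus A_1$.
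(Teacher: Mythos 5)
Your overall strategy is the same as the paper's: reduce everything to showing $I_n(D_1\setminus A_1)\subseteq D_2$, and rule out a boundary value $w_*=I_n(z_*)\in\partial D_2$ by locally inverting $I_n$ near $z_*$ and using the local inverse to extend every $\psi\in A^p(D_2)$ isometrically to $D_2\cup W_0$, contradicting $A^p$-completeness of $D_2$. However, the point you yourself flag as delicate point (i) is a genuine gap, and neither of your proposed workarounds closes it. Your construction needs a bad point $z_*$ with $J_{I_n}(z_*)\neq 0$. You cannot simply ``select $z_*$ outside the analytic set $\{J_{I_n}=0\}$'': writing $S=\{z\in D_1\setminus A_1 : I_n(z)\in\partial D_2\}$, which by your own closed-map step equals $(D_1\setminus A_1)\setminus D_1'$, nothing in your argument prevents $S$ from being contained in (indeed equal to) $\{J_{I_n}=0\}\cap(D_1\setminus A_1)$. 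In fact, since $F=I_n$ is injective on $D_1'$, its Jacobian is nonvanishing there, so $\{J_{I_n}=0\}\cap(D_1\setminus A_1)\subseteq S$ automatically: the bad set always contains the branch locus, and a priori may coincide with it. The branched-cover alternative is not routine either: $z_*$ need not be isolated in its $I_n$-fiber, and even when it is, the local inverse is multivalued and $W_0\setminus D_2$ is merely a closed null set, which is not a removable singularity set for holomorphic functions.

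The missing idea --- which is exactly how the paper proceeds, and which is available from the lemmas you already cite --- is to prove first that $J_{I_n}$ vanishes nowhere on $D_1\setminus A_1$. Apply Lemma \ref{lem:T1=Jacobi} with $\phi=\phi_0$ (so that $T\phi_0=1$) to get $|J_{I_n}(z)|^{2/p}=|\phi_0(z)|$ on $D_1'$; both sides are continuous on $D_1\setminus A_1$ and $D_1'$ is dense there, so the identity holds on all of $D_1\setminus A_1$, whence $J_{I_n}\neq 0$ off $A_1$. With this in hand, your extension argument runs at every hypothetical bad point and the proof closes. A minor remark on your point (ii): no connectivity or identity theorem is needed there. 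The set $I_n(V_0\cap D_1')\cap D_2$ is dense in $W_0\cap D_2$, because $V_0\cap D_1'$ is open and dense in $V_0$ and $I_n|_{V_0}$ is a homeomorphism onto $W_0$; since $\tilde\psi$ and $\psi$ are continuous on $W_0\cap D_2$, their agreement on this dense set already gives $\tilde\psi=\psi$ on $W_0\cap D_2$, which is how the paper argues.
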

		\begin{proof}
			We divide the proof into three steps.
			
			\emph{Step 1.} Prove that $I_n(D_1\backslash A_1)\subset D_2$.
			
			By Lemma \ref{lem: F in another form}, we have
			$$\psi(F(z))=\frac{\phi(z)}{\phi_0(z)}, \ z\in D'_1\backslash A_1,$$
			for all $\phi\in A^p(D_1)$ and $\psi\in T\phi\in A^p(D_2)$.
			Since $D_1'$ is dense in $D_1$ by Lemma \ref{lem: D' full measure} and $I_n=F$ on $D'_1\backslash A_1$,
			by Lemma \ref{lem:T1=Jacobi}, we get
			\begin{align}\label{equ: Jac}|J_{I_n}(z)|^{2/p}=|\phi_0(z)|,\ z\in D_1\backslash A_1.\end{align}
			In particular, $J_{I_n}(z)\neq 0$ for $z\in D_1\backslash A_1$.
			
			%We now assume that the $p$-Bergman kernel of $D_2$ is exhaustive.
			%Denote by $S$ the set $D_1\backslash D'_1$.
			%Then Lemma 2.9 in \cite{DWZZ20} says that
			%$$\lim_{D_1\backslash D'_1\ni z\ra S}J_F(z)=0.$$
			%By Rad\'{o}'s theorem we know that $S$, if it is not empty, is a complex surface in $D_1$.
			%Note that $J_F=J_{I_n}$ on $D'$ and $J_{I_n}(z)=0$ if and only if $z\in A_1$,
			%we get $S=A_1$ and hence $D'_1=D_1\backslash A_1$.
			
			We now argue by contradiction.
			Assume that there exists $z_0\in D_1\backslash A_1$ such that $w_0:=I_n(z_0)\notin D_2$.
			
			We have seen that $J_{I_n}(z_0)\neq 0$.
			So there exists a neighborhood $U$ of $z_0$ in $D_1\backslash A_1$ and a neighborhood $V$ of $w_0$ in $\mc^n$ such that
			$I_n(U)=V$ and $I_n|_U:U\ra V$ is a biholomorphic map.
			
			Let $\tilde D=D_2\cup V$. Note that $I_n$ is smooth, $D'_1\backslash A_1$ has full measure in $D_1$ (by Lemma \ref{lem: D' full measure}),
			and $I_n(D'_1\backslash A_1)\subset D_2$,
			it follows that $D_2\varsubsetneqq \tilde D\subset \overline{D_2}$ and $\tilde D\backslash D_2$ has null measure, where $\overline{D_2}$ is the closure of $D_2$ in $\mc^n$.
			
			Let $\psi\in A^p(D_2)$ and $\phi\in A^p(D_1)$ with $\psi=T\phi$.
			We now continue $\psi$ holomorphically to a holomorphic function $\tilde \psi$ on $\tilde D$ as follows.
			Let $\psi'\in\mathcal O(V)$ be given as
			$$\psi'=\frac{\phi}{\phi_0}\circ (I_n|_U)^{-1}.$$
			Then $\psi=\psi'$ on the dense subset $F(D'_1\backslash A_1)\cap D_2\cap V$ of $D_2\cap V$,
			it follows that $\psi=\psi'$ on $D_2\cap V$.
			Thus we can glue $\psi$ and $\psi'$ together to ge a holomorphic function $\tilde\psi$ on $\tilde D$,
			which is an extension of $\psi$ by definition.
			Since $\tilde D\backslash D_2$ has null measure.
			It follows that
			$$\int_{\tilde D}|\tilde\psi|^p=\int_{D_2}|\psi|^p.$$
			Note that $\psi\in A^p(D_2)$ is arbitrary, $D_2$  can not be $A^p$-complete, which is a contradiction.
			
			\emph{Step 2.} Prove that $D_1\backslash A_1\subset D_1'$.
			Let $z\in D_1\backslash A_1$. Since $D'_1$ is dense in $D_1$,
			there exists a sequence $\{z_j\}_{j\geq 1}$ in $D'_1\backslash A_1$ that converges to $z$.
			Since $F(z_j)=I_n(z_j)$ for all $j$.
			By the conclusion in Step 1 and the continuity of $I_n$, we get $F(z_j)\ra I_n(z)\in D_2$ as $j\ra\infty$.
			It follows from Lemma \ref{lem:F closed map} that $z\in D_1'$.
			%From Proposition \ref{prop:domain of F} we see that $D_1\backslash A_1\subset D'_1$.
			
			\emph{Step 3.} Prove that $D'_1=D_1\backslash A_1$.
			
			From Step 2, and  Riemann's removable singularity theorem, $F$ extends to a holomorphic map, denoted by $F_1$, from $D_1$ to $\mc^n$,
			with $F(D_1)\subset\overline{D_2}$.
			By (\ref{equ: Jac}) and continuity, we have
			$$|J_{F_1}(z)|^{2/p}=|\phi_0(z)|$$
			on $D_1$. It follows that $J_{F_1}(z)=0$ on $A_1$.
			From that we deduce that $A_1\cap D'_1=\emptyset$,
			and it follows that $D'_1=D_1\backslash A_1$.
		\end{proof}

		As in Proposition \ref{prop:domain of F},
		we assume that $D_1$ and $D_2$ are bounded domains in $\mc^n$, and $T:A^p(D_1)\ra A^p(D_2)$ is a linear isometry.
		We assume in addition that both $D_1$ and $D_2$ are $A^p$-complete.
		
		Let $A_1=(T^{-1}(1))^{-1}(0)=\phi^{-1}_0(0)$ and let $A_2=(T(1))^{-1}(0)$.

		Exchanging the roles of $D_1$ and $D_2$, we can define a holomorphic map $F_2:D_2\ra\mc^n$ with $F_2(D_2)\subset \overline{D_1}$.
		For the same reasons we have
		$$|J_{F_2}(w)|^{2/p}=|T(1)(w)|$$
		on $D_2$, and $D'_2=D_2\backslash A_2$.
		
		By Lemma \ref{lem:F closed map}, we get
		$$F_1(A_1)\subset \partial D_2,\ F_2(A_2)\subset\partial D_1.$$
		
		We summarize the above results as the following theorem.
		\begin{thm}\label{thm:biholo surface comp.}
			Assume that $D_1$ and $D_2$ are $A^p$-complete domains in $\mc^n$, and $T:A^p(D_1)\ra A^p(D_2)$ is a linear isometry.
			Then:
			\bi
			\item[(1)] $D'_1=D_1\backslash A_1$ and $D'_2=D_2\backslash A_2$,
			\item[(2)] $F:D_1\backslash A_1\ra D_2\backslash A_2$ is a biholomorphic map,
			\item[(3)] $|J_{F_1}(z)|^{2/p}=|\phi_0(z)|$ on $D_1$, and $|J_{F_2}(w)|^{2/p}=|T(1)(w)|$ on $D_2$,
			\item[(4)] $F_1(A_1)\subset \partial D_2$ and $F_2(A_2)\subset \partial D_1$.
			\ei
		\end{thm}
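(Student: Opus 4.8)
The plan is to assemble the four conclusions from the machinery already developed, with Proposition \ref{prop:domain of F} doing the heavy lifting and a symmetry argument supplying the statements for $D_2$. Since both $D_1$ and $D_2$ are $A^p$-complete and $T$ is a linear isometry, the inverse $T^{-1}:A^p(D_2)\ra A^p(D_1)$ is again a linear isometry; hence every assertion proved for the triple $(D_1,D_2,T)$ has a mirror image for $(D_2,D_1,T^{-1})$, and I would exploit this throughout. The only genuine task inside the theorem is to record these mirror statements consistently and to check that the two holomorphic extensions $F_1$ and $F_2$ fit together as an honest pair of mutually inverse maps.

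For conclusion (1), the identity $D'_1=D_1\backslash A_1$ is exactly Proposition \ref{prop:domain of F}, which applies because $D_2$ is $A^p$-complete. Running the same proposition with the roles of $D_1$ and $D_2$ interchanged---replacing $T$ by $T^{-1}$, $\phi_0=T^{-1}(1)$ by $T(1)$, and $A_1$ by $A_2=(T(1))^{-1}(0)$---and using that $D_1$ is $A^p$-complete, yields $D'_2=D_2\backslash A_2$. At this point I would verify, via the characterization in Lemma \ref{lem: F in another form}, that the bijection $F:D'_1\ra D'_2$ and the map $F_2$ built from $T^{-1}$ are mutually inverse: if $F(z)=w$ means $T\phi(w)=\lambda\phi(z)$ for all $\phi$, then setting $\psi=T\phi$ gives $T^{-1}\psi(z)=\lambda^{-1}\psi(w)$, which is precisely $F_2(w)=z$.

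For (2), I would recall from \S\ref{sec:construction of hol map } that $F=I_n=(\phi_1/\phi_0,\cdots,\phi_n/\phi_0)$ on $D'_1\backslash A_1$, hence on all of $D'_1=D_1\backslash A_1$, so $F$ is holomorphic there, and by (\ref{equ: Jac}) its Jacobian is nowhere zero on $D_1\backslash A_1$; since $F:D_1\backslash A_1\ra D_2\backslash A_2$ is a bijection with holomorphic inverse $F_2|_{D_2\backslash A_2}$, it is a biholomorphism. For (3), the identity $|J_{F_1}(z)|^{2/p}=|\phi_0(z)|$ holds on $D_1\backslash A_1$ by (\ref{equ: Jac}), and since both sides are continuous on $D_1$ while $F_1$ is the holomorphic extension furnished in Step 3 of Proposition \ref{prop:domain of F}, it persists on all of $D_1$; the companion identity on $D_2$ is the mirror statement. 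For (4), Proposition \ref{prop:domain of F} gives $F_1(D_1)\subset\overline{D_2}$, so it suffices to exclude $F_1(z)\in D_2$ for $z\in A_1$: if $F_1(z)=w\in D_2$, pick $z_j\in D'_1$ with $z_j\ra z$, so that $F(z_j)=F_1(z_j)\ra w\in D_2$, whence Lemma \ref{lem:F closed map}(1) forces $z\in D'_1=D_1\backslash A_1$, contradicting $z\in A_1$; thus $F_1(A_1)\subset\partial D_2$, and symmetrically $F_2(A_2)\subset\partial D_1$.

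The substantive difficulty lies upstream, in Step 1 of Proposition \ref{prop:domain of F}, where $A^p$-completeness of $D_2$ is what prevents $I_n$ from escaping $D_2$ and thereby pins down $D'_1$. Within the present theorem the only point requiring care is the clean bookkeeping of the symmetry $(D_1,D_2,T)\leftrightarrow(D_2,D_1,T^{-1})$, together with the verification just indicated that $F_1$ and $F_2$ are genuine two-sided inverses rather than merely one-sided extensions; once that is in place, conclusions (1)--(4) follow by direct transcription of the earlier lemmas.
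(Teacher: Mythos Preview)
Your proposal is correct and follows essentially the same approach as the paper: the theorem is presented there as a summary of Proposition~\ref{prop:domain of F} together with its mirror image under $(D_1,D_2,T)\leftrightarrow(D_2,D_1,T^{-1})$, with Lemma~\ref{lem:F closed map} supplying conclusion~(4). Your explicit verification that $F$ and $F_2$ are mutual inverses via Lemma~\ref{lem: F in another form} is a detail the paper leaves implicit, but otherwise the arguments coincide.
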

		
		%Assuming Conjecture \ref{con-intro:complementart A^p comp}, one can show that $A_1=\emptyset$ if and only if $A_2=\emptyset$.
		
		In the next section, we will prove that, under certain additional conditions,  $A_1$ and $A_2$ are empty.
		
		\section{Domains of boundary blow down type (BBDT) }\label{sec:bbdt}
		We first give an observation to the picture presented in Theorem \ref{thm:biholo surface comp.}.
		We preserve the notations, definitions, and assumptions as in Theorem \ref{thm:biholo surface comp.}.
		
		Gluing $D_1$ and $D_2$ by identifying $z\in D_1\backslash A_1$ and $w=F(z)\in D_2\backslash A_2$,
		we get a complex manifold, say $D:=D_1\sqcup_F D_2$.
		For the proof of this statement, we only need to check that $D$ is Hausdorff.
		It suffices to show that any $z_1\in A_1$ and $z_2\in A_2$
		have disjoint neighborhoods in $D$. The natural inclusions
		$j_1:D_1\rightarrow D$ and $j_2:D_2\rightarrow D$ are open maps.
		Since $F_1(z_1)\in \partial D_2$ and $F_2(z_2)\in\partial D_1$,
		there exist neighborhoods $U_k$ of $z_k$ in $D_k$ for $k=1,2$,
		such that $F_1(U_1)\cap U_2=\emptyset$ and $F_2(U_2)\cap U_1=\emptyset$,
		then $j_1(U_1)\cap j_2(U_2)=\emptyset$.
		We can view $A_1$ and $A_2$ as hypersurfaces of $D$ which are disjoint.
		
		We have $J_{F_1}\in\mathcal{O}(D_1)$ and $J_{F_2}\in\mathcal{O}(D_2)$,
		and $J_{F_1}(z)=\frac{1}{J_{F_{2}}(w)}$ for $z\in D_1\backslash A_1$ with $w=F_1(z)$.
		So we can define a meromorphic function $J$ on $D$ as follows:
		$$
		\begin{cases}J(j_1(z))=J_{F_1}(z),\quad\quad  z\in D_1\\
			J(j_2(w))= \frac{1}{J_{F_{2}}(w)}, \quad   w\in D_2\backslash A_2.
		\end{cases}
		$$
		Then $J\in\mathcal{M}(D)\cap\mathcal{O}(D\setminus A_2)$ and $J^{-1}(\infty)=A_2$,
		where $\mathcal M(D)$ denotes the space of meromorphic functions on $D$.
		
		We extract an abstract concept from this picture as follows.
		\begin{defn}\label{def:BBDP}
			A domain $D\subset\mathbb{C}^n$ is of \emph{boundary blow down type} (BBDT for short),
			if there exists a complex manifold $M$, a (nonempty) hypersurface $A\subset M$,
			$h\in\mathcal{M}(M)\cap\mathcal{O}(M\setminus A)$, and a holomorphic map $\sigma:M\rightarrow \mc^n$,
			such that
			\bi
			\item[(i)] $\sigma(M\backslash A)=D$ and  $\sigma(A)\subset\partial D$,
			\item[(ii)] $\sigma|_{M\backslash A}: M\backslash A\ra D$ is a biholomorphic map,
			\item[(iii)] $h^{-1}(\infty)=A$.
			\ei
		\end{defn}
		
		Go back to our previous construction.
		If $A_2\neq \emptyset$, then $D_1$ is of BBDT.
		
		It is not easy to construct a domain of BBDT.
		We will see an example in the final section.
		
		%\begin{exm}
		%Let $k\geq 1$ be an integer.
		%We define a holomorphic map
		%$$f_k:\mathbb{C}^2\rightarrow\mathbb{C}^2,\ (z_1,z_2)\mapsto (z_1,z^k_1z_2).$$
		%Then $f$ gives a biholomorphic map from $\mathbb{C}^*\times \mathbb{C}$ onto $\mathbb{C}^*\times \mathbb{C}$,
		%with inverse $$g_k(w_1,w_2)=(w_1,w_1^{-k}w_2).$$
		%Let $\mathbb{B}$ be the unit ball in $\mathbb{C}^2$, and $\mathbb{B}'=\mathbb{B}\setminus\{z_1=0\}$.
		%Set $$D_1=\mathbb{B}\times f_l(\Delta^*\times \Delta),\ D_2=f_k(\mathbb{B}')\times\Delta^2,$$
		%where $\Delta$ is the unit disc in $\mc$.
		%Consider map
		%$$F:D_1\rightarrow \mathbb{C}^4,\ (z_1,z_2,z_3,z_4)\mapsto (w_1,w_2,w_3,w_4):=(z_1,z_1^kz_2,z_3,z_3^{-l}z_4).$$
		%Then we have
		%$$F|_{D_1\setminus\{z_1=0\}}:D_1\setminus\{z_1=0\}\rightarrow D_2\setminus\{w_3=0\}$$
		%is biholomorphic, and
		%$$\{J_F(z)=0\}=\{z_1=0\},$$
		%and
		%$$F(\{z_1=0\})\subset \partial D_2,\  G(\{w_3=0\})\subset \partial D_1,$$
		%where $G$ is a holomorphic extension of $(F|_{D_1\setminus\{z_1=0\}})^{-1}$ to $D_2$.
		%Applying the construction discussed above, we can see that $D_1$ and $D_2$ are domains of BBDT.
		%\end{exm}
		
		The following notion is also tightly related to our discussion.
		\begin{defn}
			A complex manifold $X$ is said to have \textit{{punctured disc property}} (PDP for short)  if
			there is a proper holomorphic map from $\mathbb D^*$ to $X$,
			where $\mathbb D^*=\{z\in\mc| 0<|z|\leq 1\}$.
		\end{defn}

		%A complex manifold $X$ does not have PDP if it is biholomorphic to another complex manifold
		%$Y$, with a analytic subset deleted.
		%By Theorem \ref{thm:bihoto surface comp.}, we get
		
		%\begin{thm}
		%Assume that $D_1$ and $D_2$ are $A^p$ complete and there is a linear isometry $T:A^p(D_1)\ra A^p(D_2)$.
		%If $D_1$ and $D_2$ have PDP, then $T$ induces a unique biholomorphism $F:D_1\rightarrow D_2$.
		%which we can choose a single valued holomorphic function $J_F^{2/p}$ on $D_1$,
		%and there is a constant number $\theta$ with $|\theta|=1$, such that
		%$\phi(z)=\theta T(\phi)(F(z))J_F^{2/p}(z)$ holds for any $z\in D_1$ and any $\phi\in A^p(D_1)$.
		%\end{thm}
		
		Recall that a domain $D\subset\mc^n$ is called a Runge domain if $D$ is holomorphically convex and any holomorphic function
		on $D$ can be approximated uniformly on compact subsets of $D$ by polynomials.
		\begin{lem}\label{lem:domain have PDP}
			A bounded domain $D$ does not have  PDP if either $D$ is Runge or $D$ is hyperconvex.
		\end{lem}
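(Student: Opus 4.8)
The plan is to argue by contradiction: suppose there is a proper holomorphic map $\sigma:\mathbb D^*\ra D$, and derive a contradiction separately in the hyperconvex case and in the Runge case. First I would record the one consequence of properness that both cases use. Since $D$ is bounded, for a compact exhaustion $K_1\subset K_2\subset\cdots$ of $D$ the sets $\sigma^{-1}(K_j)$ are compact in $\mathbb D^*$, hence bounded away from $0$; therefore $\sigma(z)$ leaves every compact subset of $D$ as $z\ra 0$, i.e. $\mathrm{dist}(\sigma(z),\pa D)\ra 0$ as $z\ra 0$. Properness enters only through this boundary behaviour.

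For the hyperconvex case, let $\rho:D\ra[-\infty,0)$ be the plurisubharmonic exhaustion. The function $u:=\rho\circ\sigma$ is subharmonic and $<0$ on the open punctured disc $\{0<|z|<1\}$. Because each sublevel set $\{\rho\le c\}$ with $c<0$ is compact, the boundary behaviour above forces $\rho(\sigma(z))>c$ for $|z|$ small, so $\lim_{z\ra 0}u(z)=0$. As $u$ is bounded above by $0$, the removable singularity theorem for subharmonic functions extends $u$ to a subharmonic function $\tilde u$ on $\{|z|<1\}$ with $\tilde u(0)=\limsup_{z\ra0}u(z)=0$. Then $\tilde u\le 0$ attains its maximum $0$ at the interior point $0$, so by the maximum principle $\tilde u\equiv 0$, contradicting $u<0$. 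Hence no such $\sigma$ exists.

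For the Runge case I would instead run a polynomial-hull argument. Since $D$ is bounded, $\sigma$ is bounded, so each coordinate extends across $0$ by Riemann's theorem, giving a holomorphic extension $\tilde\sigma:\{|z|<1\}\ra\mc^n$, continuous up to $\{|z|=1\}$, with $w_0:=\tilde\sigma(0)\in\pa D$ and $\tilde\sigma(\overline{\mathbb D}\setminus\{0\})\subset D$. Set $K:=\tilde\sigma(\{|z|=1\})$, a compact subset of $D$. For every polynomial $P$, the maximum principle applied to $P\circ\tilde\sigma$ gives $|P(\tilde\sigma(z))|\le\sup_K|P|$ for all $|z|\le 1$, so the connected compact set $\tilde\sigma(\overline{\mathbb D})$ lies in the polynomial hull $\widehat K$. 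Writing $\widehat K^{\,D}$ for the $\mathcal O(D)$-convex hull of $K$, holomorphic convexity makes $\widehat K^{\,D}$ a compact subset of $D$, and Runge approximation shows $\widehat K\cap D=\widehat K^{\,D}$: the inclusion $\supseteq$ is immediate since polynomials lie in $\mathcal O(D)$, and for $\subseteq$ one approximates any $h\in\mathcal O(D)$ by polynomials uniformly on the compact set $K\cup\{w\}$ for $w\in\widehat K\cap D$. Consequently $\widehat K\cap D$ is open in $\widehat K$ (intersection with $D$) and closed in $\widehat K$ (being compact), hence clopen. But $\tilde\sigma(\overline{\mathbb D})$ is connected, lies in $\widehat K$, and meets both $\widehat K\cap D$ (at $\tilde\sigma(z)$ for $z\ne 0$) and its complement (at $w_0\notin D$); a connected set cannot straddle a clopen partition, a contradiction.

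The hyperconvex case is routine once properness is converted into boundary behaviour. The hard part will be the Runge case, and precisely the identification $\widehat K\cap D=\widehat K^{\,D}$ together with its clopenness in $\widehat K$: this is the step where both ingredients in the definition of a Runge domain are genuinely used — holomorphic convexity to make the hull compact (hence closed in $\widehat K$), and polynomial density to match the two hulls — and it is the part most prone to error.
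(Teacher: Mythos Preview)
Your proof is correct and follows essentially the same route as the paper's: contradiction via a subharmonic extension and the maximum principle in the hyperconvex case, and via the polynomial hull of $\sigma(S^1)$ together with Riemann extension and the maximum principle in the Runge case. The only difference is cosmetic: the paper simply asserts that for a Runge domain the polynomial hull $\widehat{\sigma(S^1)}$ lies in $D$ and gets an immediate contradiction with properness, whereas you unpack this by proving $\widehat K\cap D=\widehat K^{\,D}$ and running the clopen/connectedness argument on $\tilde\sigma(\overline{\mathbb D})$; and in the hyperconvex case the paper invokes strict plurisubharmonicity of $\rho$ for the contradiction, while you more directly use $u<0$, which matches the stated definition of hyperconvexity without needing to upgrade $\rho$.
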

		\begin{proof}
			We argue by contradiction.
			If $D$  has  PDP, then there is a proper holomorphic map $h:\mathbb D^*\ra D$.
			
			If $D$ is Runge, then the polynomial convex hull $\widehat{h(S^1)}$ of $h(S^1)$ lies in $D$.
			On the other hand, by Riemann's removable singularity theorem and the maximum principle for holomorphic functions,
			we see that $h(\mathbb D^*)\subset \widehat{h(S^1)}$, which contradicts to the assumption that $h:\mathbb D^*\ra D$ is proper.
			
			If $D$ is hyperconvex, then there is a strictly p.s.h function $\rho:D\ra [-\infty, 0)$ such that for any $c<0$
			the set $\{z\in D| \rho(z)\leq c\}$ is compact.
			Consider $\tilde\rho:=\rho\circ h$, which is a  subharmonic function on $\mathbb D^*$. Note that $\tilde\rho$  is bounded above, it can be  extended to a  subharmonic function on $\mathbb D$,
			which is also denoted by $\tilde\rho$.
			It is clear that $\tilde\rho$ attains its maximum 0 at the origin,
			hence $\tilde\rho$ is a constant function,
			which contradicts to the assumption that $\rho$ is strictly plurisubharmonic.
		\end{proof}
		
		\begin{thm}\label{thm:pdp implies not bbdt}
			Let $D\subset\mathbb{C}^n\ (n>1)$ be a bounded domain that does not have PDP, then
			\bi
			\item[(i)] $D$ is not of BBDT,
			\item[(ii)] $D\backslash K$ is not of BBDT for any compact set $K\subset D$ such that $D\backslash K$ is connected.
			\ei
		\end{thm}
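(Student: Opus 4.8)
The plan is to prove the contrapositive in a unified way: I will show that a boundary blow down structure on a domain either forces that domain to have PDP, or (when the exceptional hypersurface collapses onto an \emph{inner} boundary) is obstructed by Hartogs extension. The common device in both parts is a transverse holomorphic disc through the blow down hypersurface. Recall that in the definition of BBDT the map $\sigma$ is holomorphic, restricts to a biholomorphism $M\setminus A\ra D$, and satisfies $\sigma(A)\subset\partial D$; this last containment is what makes a disc hitting $A$ run off to the boundary of $D$.

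For part (i), I would take BBDT data $(M,A,h,\sigma)$ for $D$. Since $A$ is a nonempty hypersurface its regular locus is nonempty, so I pick a smooth point $a\in A$ and local coordinates $(w_1,\dots,w_n)$ centred at $a$ with $A=\{w_1=0\}$ nearby. The disc $\gamma(t)=(ct,0,\dots,0)$ (read in these coordinates, with $c$ small enough that it is holomorphic on a neighbourhood of $\overline{\mathbb D}$ and stays in the chart) satisfies $\gamma(0)=a$ and $\gamma(t)\in M\setminus A$ for $0<|t|\le 1$. Setting $f=\sigma\circ\gamma$, I get $f(\mathbb D^*)\subset\sigma(M\setminus A)=D$, while $f(t)\to\sigma(a)\in\sigma(A)\subset\partial D$ as $t\to0$. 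Properness of $f\colon\mathbb D^*\ra D$ is then elementary: for compact $L\subset D$ the limit point $\sigma(a)\notin L$, so $f(t)\notin L$ for $|t|<\delta$, whence $f^{-1}(L)$ is a closed subset of the compact set $\{\delta\le|t|\le1\}$ and is therefore compact. Thus $D$ has PDP, contradicting the hypothesis, so $D$ is not of BBDT.

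For part (ii), I would write $D'=D\setminus K$ and suppose, for contradiction, that $D'$ is BBDT with data $(M',A',h',\sigma')$. Since $K$ is compact in $D$ one has $\partial D'\subset\partial D\cup\partial K$ with $\partial K\subset D$ and $\partial D\cap D=\emptyset$, so $\sigma'(A')\subset\partial D\cup\partial K$, and I split into two cases. If $\sigma'(A')\subset\partial D$, I run the disc construction of part (i) inside $M'$ at a smooth point $a'\in A'$; the resulting map sends $\mathbb D^*$ into $D'\subset D$ with end converging to $\sigma'(a')\in\partial D$, and since its image avoids $K$ the set $K$ is irrelevant to properness in $D$, so $D$ acquires PDP, a contradiction. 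Otherwise some $a'\in A'$ has $b:=\sigma'(a')\in\partial K\subset D$. Here I would transport $h'$ through the biholomorphism $\sigma'|_{M'\setminus A'}$: letting $\tau$ be its inverse, $g:=h'\circ\tau\in\mathcal O(D')$. Because $a'$ lies on the polar set $A'=(h')^{-1}(\infty)$, the function $h'$ is unbounded in every neighbourhood of $a'$, so I may choose $w_j\to a'$ in $M'\setminus A'$ with $|h'(w_j)|\to\infty$; then $\zeta_j:=\sigma'(w_j)\in D'$ satisfies $\zeta_j\to b$ and $g(\zeta_j)=h'(w_j)\to\infty$, i.e.\ $g$ is unbounded near the interior boundary point $b\in\partial K$. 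But $n>1$ and $D'=D\setminus K$ is connected with $K$ compact in $D$, so the Hartogs extension theorem furnishes $\tilde g\in\mathcal O(D)$ extending $g$; being continuous at $b\in D$, $\tilde g$ is bounded near $b$, contradicting $g(\zeta_j)\to\infty$.

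The hard part will be the second case of part (ii): nothing in the BBDT data prevents the exceptional hypersurface $A'$ from mapping into the inner boundary $\partial K$ instead of $\partial D$, and in that scenario the disc construction only yields a proper map into $D'$, not into $D$, so it cannot by itself produce PDP for $D$. It is exactly here that the hypotheses $n>1$ and the connectivity of $D\setminus K$ are essential, entering through Hartogs extension, which rules this scenario out. The remaining routine points to pin down are the properness estimate for $f$ and the standard fact that a meromorphic function is unbounded in every neighbourhood of a point of its polar divisor.
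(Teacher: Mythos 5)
Your proof is correct and takes essentially the same route as the paper: the transverse-disc construction for (i) (which the paper dismisses as ``trivial by definition''), and for (ii) the dichotomy between $\sigma'(A')$ hitting $\partial D$ (ruled out by the disc/PDP argument) versus hitting $\partial K\subset K$ (ruled out by Hartogs extension of $h'\circ(\sigma'|_{M'\setminus A'})^{-1}$, contradicting that $h'$ has poles along $A'$). Your case split and the blow-up sequence at $b$ are just a more explicit organization of the paper's compressed steps ``since $D$ does not have PDP, $\sigma(A)\subset K$'' and ``this contradicts $h^{-1}(\infty)=A$.''
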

		\begin{proof}
			The proof of (i) is trivial by definition.
			We prove (ii) using argument by contradiction.
			Assume $\tilde{D}:=D\backslash K$ is of boundary blow down type.
			By definition,  there exists a complex manifold $M$, a (nonempty) hypersurface $A\subset M$,
			$h\in\mathcal{M}(M)\cap\mathcal{O}(M\setminus A)$, and a holomorphic map $\sigma:M\rightarrow \mc^n$,
			such that
			\bi
			\item[(i)] $\sigma(M\backslash A)=\tilde D$ and  $\sigma(A)\subset\partial\tilde D$,
			\item[(ii)] $\sigma|_{M\backslash A}: M\backslash A\ra \tilde D$ is a biholomorphic map,
			\item[(iii)] $h^{-1}(\infty)=A$.
			\ei
			Note that $\partial \tilde{D}\subset \partial D\cup K$. Since $D$ does not have PDP, we have $\sigma(A)\subset K$.
			Let $h'=h\circ (\sigma|_{M\backslash A})^{-1}$, then $h'\in\mathcal{O}(\tilde{D})$.
			We can extend $h'$ holomorphically to $D$ by Hartogs' theorem.
			This contradicts to the fact that $h^{-1}(\infty)=A$.
		\end{proof}

		%We now focus on the case that $p<2$.
		%We have constructed a holomorphic map $F:D_1\ra \mc^n$ such that $F(A_1)\subset\partial D_2$.
		%We know that $F$ satisfies the property that
		%$$\psi(F(z))\phi_0(z)=\phi(z), \psi\in A^p(D_2), z\in D_1\backslash A_1,$$
		%where $\phi_0=T^{-1}1$. We also know that
		% $J_F(z)^{2/p}$ has a single-valued branch on $D_1$ and $\theta J^{2/p}_F(z)= T^{-1}1$
		%for some constant $\theta$ with modulus 1.
		%It is clear that $T^{-1}\psi(z)=\psi(F(z))J^{2/p}_F(z)$ on $D_1\backslash A_1$.
		%Since $T:A^p(D_1)\ra A^p(D_2)$ is a linear isometry,
		%$\psi(F(z))J^{2/p}_F(z)$ can extends to an element $T^{-1}\psi$ in $A^p(D_1)$.
		%Note that $\psi(F(z))J^{2/p}_F(z)\in A^p(D_1\backslash A_1)$ and $p<2$,
		%so $\psi(F(z))J^{2/p}_F(z)$ can have a pole of order 1 along $A_1$,
		%and hence not all $\psi(F(z))J^{2/p}_F(z)$ can extends to a holomorphic function on $D_1$.
		%Therefore we get a contradiction and hence $A_1=\emptyset$.
		%In the same way, we can prove that $A_2=\emptyset$,
		%and hence $F:D_1\ra D_2$ is a biholomorphic map.
		
		We go back to the setting of Theorem \ref{thm:biholo surface comp.},
		by the above discussion, we see that $A_1$ and $A_2$ are both empty sets and get a biholomorphic map $F:D_1\ra D_2$, if $D_1$ and $D_2$ are not of BBDT.
		Combing this with Lemma \ref{lem:T1=Jacobi}, we get the following
		
		\begin{thm}[=Theorem \ref{thm-intro:not bbdt implies biholomorphic}]\label{thm:not bbdt implies biholomorphic}
			Assume that $D_1$ and $D_2$ are $A^p$-complete domains which are not of boundary blow down type, and $T:A^p(D_1)\ra A^p(D_2)$ is a linear isometry,
			for some $p>0$ and $p$ is not an even integer.
			Then there exists a unique biholomorphic map $F:D_1\ra D_2$ such that $J_F(z)^{2/p}$ has a single-valued branch on $D_1$,
			and
			$$T\phi(F(z))J_F(z)^{2/p}=\lambda\phi(z),\ \forall \phi\in A^p(D_1), z\in D_1,$$
			where  $\lambda$ is a constant with  $|\lambda|=1$.
		\end{thm}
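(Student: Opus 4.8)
The plan is to combine the ``almost biholomorphism'' produced in Theorem~\ref{thm:biholo surface comp.} with the boundary blow down obstruction, and then to upgrade the modulus identity of Lemma~\ref{lem:T1=Jacobi} to a genuine, phase-preserving functional equation. First I would invoke Theorem~\ref{thm:biholo surface comp.}: since $D_1,D_2$ are $A^p$-complete we already have $D_1'=D_1\setminus A_1$, $D_2'=D_2\setminus A_2$, the biholomorphism $F\colon D_1\setminus A_1\to D_2\setminus A_2$, the extensions $F_1,F_2$ with $|J_{F_1}|^{2/p}=|\phi_0|$ and $|J_{F_2}|^{2/p}=|T(1)|$, and the boundary behaviour $F_1(A_1)\subset\partial D_2$, $F_2(A_2)\subset\partial D_1$. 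The crucial reduction is to show $A_1=A_2=\emptyset$. For this I would use the gluing construction preceding Definition~\ref{def:BBDP}: if $A_2\neq\emptyset$, then taking $M=D_1\sqcup_F D_2$, $A=A_2$, $h=J$, and $\sigma$ equal to the inclusion on the $D_1$-chart realizes $D_1$ as a domain of boundary blow down type, contradicting the hypothesis on $D_1$. Running the same construction with the roles of $D_1$ and $D_2$ interchanged (with $A=A_1$ and $h=1/J$) shows that $A_1\neq\emptyset$ would force $D_2$ to be of BBDT. Hence $A_1=A_2=\emptyset$, and $F=F_1\colon D_1\to D_2$ is a biholomorphism with $D_1'=D_1$.

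Next I would recover the scalar factor. By Lemma~\ref{lem: F in another form}(3), for every $z\in D_1$ there is $\lambda(z)\in\mathbb C^*$ with $T\phi(F(z))=\lambda(z)\phi(z)$ for all $\phi$; taking $\phi=\phi_0=T^{-1}(1)$ gives $\lambda(z)=1/\phi_0(z)$, which is holomorphic and nowhere zero on $D_1$ because $A_1=\phi_0^{-1}(0)=\emptyset$. Thus
\[
  T\phi(F(z))=\frac{\phi(z)}{\phi_0(z)},\qquad z\in D_1,\ \phi\in A^p(D_1).
\]
It then remains to construct a single-valued branch of $J_F^{2/p}$ and match phases. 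From $|J_F|^{2/p}=|\phi_0|$ I get $\log|J_F|^2=\tfrac p2\log|\phi_0|^2$, and applying $\partial$ yields the identity of logarithmic-derivative forms $\tfrac 2p\, d\log J_F=d\log\phi_0$ on $D_1$ (both $J_F$ and $\phi_0$ being nowhere vanishing). Consequently $\tfrac 2p\oint_\gamma d\log J_F=\oint_\gamma d\log\phi_0\in 2\pi i\,\mathbb Z$ for every loop $\gamma$, so the monodromy of $\exp(\tfrac 2p\log J_F)$ cancels that of $\phi_0$; this shows $J_F^{2/p}/\phi_0$ is single-valued, and being locally constant on the connected set $D_1$ it equals a constant $\mu$, with $|\mu|=|J_F|^{2/p}/|\phi_0|=1$. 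In particular $G:=\mu\,\phi_0$ is a single-valued holomorphic branch of $J_F^{2/p}$, and then
\[
  T\phi(F(z))\,J_F(z)^{2/p}=\frac{\phi(z)}{\phi_0(z)}\cdot \mu\,\phi_0(z)=\mu\,\phi(z),
\]
which is the asserted equation with $\lambda=\mu$, $|\lambda|=1$.

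Finally, uniqueness follows from the hyperplane correspondence of Lemma~\ref{lem: F in another form}: any biholomorphism satisfying such an equation must send $z$ to the unique point $w\in D_2$ for which $T\phi(w)$ is proportional to $\phi(z)$ for all $\phi$, and this $w$ is unique because $A^p(D_2)$ separates points of $D_2$. I expect the main obstacle to be the bookkeeping in the middle paragraph: turning the modulus identity $|J_F|^{2/p}=|\phi_0|$ into an honest single-valued holomorphic branch of $J_F^{2/p}$ relies on the monodromy cancellation above, and one must be careful that $D_1$ need not be simply connected, so that neither $\log J_F$ nor $\log\phi_0$ is individually single-valued; it is precisely the coincidence of their periods, forced by $\tfrac 2p\,d\log J_F=d\log\phi_0$, that makes the quotient branch well defined.
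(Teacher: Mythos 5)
Your proposal is correct and follows essentially the same route as the paper: invoke Theorem \ref{thm:biholo surface comp.}, use the gluing construction $D_1\sqcup_F D_2$ with the meromorphic function $J$ to show that $A_1\neq\emptyset$ or $A_2\neq\emptyset$ would make $D_2$ or $D_1$ of BBDT, and then combine Lemma \ref{lem: F in another form} with $|J_F|^{2/p}=|\phi_0|$ to produce the branch $\lambda\phi_0$ of $J_F^{2/p}$ and the phase-exact functional equation. Your period/monodromy argument for single-valuedness of $J_F^{2/p}$ and your explicit uniqueness argument are somewhat more detailed than the paper's terse ball-plus-connectedness remark, but they implement the same idea rather than a different approach.
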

		
		\begin{proof}
			By the above discussion and Lemma \ref{lem:T1=Jacobi}, we get a biholomorphic map $F:D_1\ra D_2$ such that
			$$|T\phi(F(z))||J_F(z)|^{2/p}=|\phi(z)|,\ \forall \phi\in A^p(D_1), z\in D_1.$$
			Considering $\phi_0\in A^p(D_1)$ with $T\phi_0=1\in A^p(D_2)$,
			we see that
			$$|J_F(z)|^{2/p}=|\phi_0(z)|, \ z\in D_1.$$
			Let $B$ be a ball in $D_1$, then there exists a single-valued branch of $J_F^{2/p}$ on $B$ which equals to $\lambda\phi_0$ for some
			$\lambda\in\mc$ with $|\lambda|=1$. Note that $D_1$ is connected and $\phi_0$ is globally defined on $D_1$,
			$\lambda\phi_0$ must be a branch of $J_F^{2/p}$ on $D_1$.
			The last statement is a direct consequence of Lemma \ref{lem: F in another form} (2),
			$$\phi_0(z)T\phi(F(z))=T\phi_0(F(z))\phi(z),\; \forall \phi\in A^p(D_1), z\in D_1.$$
			As $\phi_0(z)=\frac{J_F^{2/p}(z)}{\lambda}$ and $T\phi_0=1$,
			so we get
			$$T\phi(F(z))J_F(z)^{2/p}=\lambda\phi(z),\ \forall \phi\in A^p(D_1), z\in D_1.$$

		\end{proof}

		\section{A counter-example to Conjecture \ref{conj:A^p complete implies iso} if $p>2$}
		The aim of this section is to give an example to show that Conjecture \ref{conj:A^p complete implies iso} could be false if we drop the assumption that $p\in(0,2)$.
		The construction is as follows.
		
		Let $k\geq 1$ be an integer, and consider the map
		$$f_k:\mathbb{C}^2\rightarrow\mathbb{C}^2,\ (z_1,z_2)\mapsto (z_1,z^k_1z_2).$$
		Then $f_k$ gives a biholomorphic map from $\mathbb{C}^*\times \mathbb{C}$ onto $\mathbb{C}^*\times \mathbb{C}$,
		with inverse given by 
		$$g_k(w_1,w_2)=(w_1,w_1^{-k}w_2).$$
		Let $\mathbb{B}$ be the unit ball in $\mathbb{C}^2$, and $\mathbb{B}'=\mathbb{B}\setminus\{z_1=0\}$.
		Set $$D_1=\mathbb{B}\times f_k(\Delta^*\times \Delta),\ D_2=f_k(\mathbb{B}')\times\Delta^2,$$
		where $\Delta$ is the unit disc in $\mc$.
		Note that $$f_k(\Delta^*\times \Delta)=\{(w_1,w_2):|w_2|<|w_1|^k<1\}$$
		and
		$$f_k(\mathbb{B}')=\{(w_1,w_2):|w_2|<|w_1|^k\sqrt{1-|w_1|^2}\}.$$
		It follows that $\mathring{\overline{D_1}}=D_1$ and $\mathring{\overline{D_2}}=D_2$. 
		Therefore, both $D_1$ and $D_2$ are $A^p$-complete for any $p>0$.
		
		Consider the maps
		$$F:D_1\rightarrow \mathbb{C}^4,\ (z_1,z_2,z_3,z_4)\mapsto (w_1,w_2,w_3,w_4):=(z_1,z_1^kz_2,z_3,z_3^{-k}z_4),$$
		and
		$$G:D_2\rightarrow \mathbb{C}^4,\ (w_1,w_2,w_3,w_4)\mapsto (z_1,z_2,z_3,z_4):=(w_1,w_1^{-k}w_2,w_3,w_3^kw_4).$$
		
		Let $D_1'=D_1\setminus\{z_1=0\}$ and $D_2'=D_2\setminus\{w_3=0\}$.
		Then 
		$$F|_{D_1'}:D_1'\rightarrow D_2'$$
		is biholomorphic, whose inverse is $G|_{D_2'}$.
		We have 
		$$J_F^{-1}(0)=\{J_F(z)=(z_1z_3^{-1})^k=0\}=\{z\in D_1: z_1=0\},$$
		$$J_G^{-1}(0)=\{J_G(w)=(w^{-1}_1w_3)^k=0\}=\{w\in D_2: w_3=0\},$$
		and
		$$F(\{z\in D_1:\ z_1=0\})\subset \partial D_2,\  G(\{w\in D_2:\ w_3=0\})\subset \partial D_1.$$
		It follows that that $D_1$ and $D_2$ are domains of BBDT (see Definition \ref{def:BBDP}).
		
		Let $k$ and $m$ two positive integers, such that $p:=\frac{2k}{m}$ is not an even integer.
		We can take $J_F^{2/p}(z)=(z_1z_3^{-1})^m$ (resp. $J_G^{2/p}(w)=(w_1^{-1}w_3)^m$) as a single-valued branch of $J_F^{2/p}$ on $D'_1$ (resp. $J_G^{2/p}$ on $D'_2$).
		Then the map
		$$T:A^p(D_1')\rightarrow A^p(D_2'):T(\phi)(w):=\phi(G(w))J_G^{2/p}(w)$$
		is a linear isometry between $A^p(D_1')$ and $A^p(D_2')$.
		
		If $p\geq 2$, then any $\phi \in A^p(D_j')$ can be extended to a holomorphic function, say $\tilde{\phi}$, on $D_j$ with 
		$$\int_{D_j'}|\phi|^p=\int_{D_j}|\tilde\phi|^p,\ j=1,2.$$ 
		Therefore we can identify  $ A^p(D_j')$ with $A^p(D_j)$ for $j=1,2$, 
		and view $T$ as a linear isometry from $ A^p(D_1)$ to $ A^p(D_2)$.

		\emph{Claim:} $D_1$ and $D_2$ are not biholomorphic.
		
	   It follows that the Conjecture \ref{conj:A^p complete implies iso} could be false if $p>2$.

		We now prove the claim.
		It suffices to show that the dimension of the automorphism groups $Aut(D_1)$ and $Aut(D_2)$ are not equal.
		By definition, we have 
		$$D_1\cong \mathbb B\times \Delta^*\times\Delta,\ D_2\cong\mathbb B'\times\Delta\times\Delta.$$
		By {\cite[Corollary (5.4.11)]{Kob98}}, we just need to show 
		\begin{equation}\label{eqn:dimension of aut group}
			\dim Aut(\mathbb B)+ \dim Aut(\Delta^*)+ \dim Aut(\Delta)\neq \dim Aut(\mathbb B') + \dim Aut(\Delta)+\dim Aut(\Delta).
		\end{equation}
		The left hand side of \eqref{eqn:dimension of aut group} is 12, and the right hand side of \eqref{eqn:dimension of aut group} is $ \dim Aut(\mathbb B')+6$.
		So it suffices to show that $ \dim Aut(\mathbb B')<6$.
		By Riemann's removable singularity theorem, we have 
		$$ \dim Aut(\mathbb B')=\{f\in Aut(\mathbb B)|f(H)=H\},$$
		where $H=\{(z,w)\in \mathbb B|z=0\}$.
		Note that a unitary map preserving $H$ must be diagonal, and a map $f\in Aut(\mathbb B)$ preserving $H$ must map the origin to some point in $H$,
		it follows that $$\dim Aut(\mathbb B')\leq 2+2=4.$$
		We are done.

		\bibliographystyle{amsplain}

	\end{document}